\definecolor{green}{rgb}{0,0.4,0}
\newcommand{\R}{\mathbb{R}}
\newcommand{\cB}{\mathcal{B}}
\newcommand{\cC}{\mathcal{C}}
\newcommand{\cD}{\mathcal{D}}
\newcommand{\cE}{\mathcal{E}}
\newcommand{\cG}{\mathcal{G}}
\newcommand{\cI}{\mathcal{I}}
\newcommand{\cJ}{\mathcal{J}}
\newcommand{\cN}{\mathcal{N}}
\newcommand{\cO}{\mathcal{O}}
\newcommand{\cP}{\mathcal{P}}
\newcommand{\cS}{\mathcal{S}}
\newcommand{\cU}{\mathcal{U}}
\newcommand{\cV}{\mathcal{V}}
\newcommand{\cW}{\mathcal{W}}
\newcommand{\cX}{\mathcal{X}}
\newcommand{\cY}{\mathcal{Y}}
\newcommand{\bA}{\bar{A}}
\newcommand{\bB}{\bar{B}}
\newcommand{\bC}{\bar{C}}
\newcommand{\bK}{\bar{K}}
\newcommand{\bM}{\bar{M}}
\newtheorem{definition}{Definition}
\newtheorem{theorem}{Theorem}
\newtheorem{corollary}{Corollary}
\newtheorem{lemma}{Lemma}
\newtheorem{remark}{Remark}
\title{\LARGE\textbf{Static Output Feedback:}\\ \textbf{On Essential Feasible Information Patterns}}
\author{%
    J. Frederico~Carvalho~$^{\sharp,\ast}$\quad %
    S\'ergio~Pequito~$^{\dagger,\ast}$\quad %
    A.~Pedro~Aguiar~$^{\diamond}$\quad \\ 
    Soummya~Kar~$^{\ddagger}$\quad %
    George~J.~Pappas~$^{\dagger}$
    \thanks{%
   This work was supported in part by the TerraSwarm Research Center, one of six centers supported by the STARnet phase of the Focus Center Research Program (FCRP) a Semiconductor Research Corporation program sponsored by MARCO and DARPA, and the NSF ECCS-1306128 grant.}%
    \thanks{${}^{\ast}$%
    The first two authors made equal contributions to the research and writing of this paper.}%
    \thanks{${}^{\dagger}$%
    Department of Electrical and Systems Engineering, School of Engineering and Applied Science, University of Pennsylvania}%
    \thanks{${}^{\ddagger}$%
    Department of Electrical and Computer Engineering, Carnegie Mellon University, Pittsburgh, PA 15213.}%
    \thanks{${}^{\diamond}$%
    Department of Electrical and Computer Engineering, Faculty of Engineering, University of Porto (FEUP), Portugal.}%
    \thanks{${}^{\sharp}$%
    Computer Science and Communication School, Royal Technical Institute (KTH), Stockholm, Sweden.}
}
\begin{document}

\maketitle

\begin{abstract}
In this paper, for linear time-invariant plants, where a collection of possible inputs and outputs are known a priori, we address the problem of determining the communication between outputs and inputs, i.e., information patterns, such that desired control objectives  of the closed-loop system (for instance, stabilizability) through static output feedback may be ensured.

We address this problem in the structural system theoretic context. To this end, given a specified structural pattern (locations of zeros/non-zeros) of the plant matrices, we introduce the concept of  \emph{essential} information patterns, i.e., communication patterns between outputs and inputs that satisfy the following conditions: $(i)$ ensure arbitrary spectrum assignment of the closed-loop system, using static output feedback constrained  to the information pattern,   for almost all possible plant instances with the specified structural pattern; and $(ii)$ any communication failure precludes the resulting information pattern from attaining the pole placement objective in $(i)$.

Subsequently, we  study the problem of  determining essential information patterns.
First, we provide several necessary and sufficient conditions to verify whether a specified information pattern is essential or not. Further, we show that such conditions can be verified by resorting to algorithms with polynomial complexity (in the dimensions of the state, input and output). Although such verification can be performed efficiently, it is shown that the problem of determining essential information patterns is in general NP-hard.  The main results of the paper are illustrated through examples.
\end{abstract}
\vspace{-0.2cm}

\section{Introduction}
\vspace{-0.1cm}

Real world systems are often too complex to be tackled by the classical paradigm of centralized decision-making. These systems include multi-agent  networks, infrastructure systems such as the electric power grid, process control and manufacturing systems, just to name a few~\cite{Sandel_etall:1978,siljak2007large,decentralizedSiljak}.
Furthermore, due to the distributed nature of the sensing-actuation capabilities of the aforementioned systems, there exist more often than not, a multitude of decision-makers. Therefore, the crafted communication structures need to take into account that only partial data may be accessed by the decision-makers, while guaranteeing that a desired closed-loop control performance is achievable.  Some groundbreaking work in the understanding of necessary and sufficient conditions to ensure arbitrary spectrum placement of  closed-loop systems constrained to specified information patterns can be found in~\cite{Sandel_etall:1978,Davison,morse,gong}. In recent years, research in decentralized control has seen several advances that contributed to a renewed interest in the field~\cite{Mahajan,Bakule08,BakuleP12,RotkowitzMTNS14,RotkowitzCDC14,Yukseldecentralizedcomputation,Langbort2009}.

In order to contribute to the understanding of how to instrument the communication between decision-makers, we specifically focus on addressing the following  question:

\begin{itemize}
\item[\textbf{Q}] What is the \emph{essential information pattern}, i.e., which sensors need to supply data to which actuators, such that desired control objectives (for instance, stabilizability) may be ensured, and any communication failure renders these objectives unattainable?
\end{itemize}

Hereafter, the desired control objective consists in ensuring that the spectrum of the closed-loop system, using static output feedback subject to the information pattern (IP), can be arbitrarily chosen. The above question has a direct consequence in the study of the  resilience of a closed-loop system with respect to communication failures or general changes in the IP. Furthermore, it  provides unique insights on the design of robust communication structures between decision-makers.

Towards this goal, we resort to a structural system theoretic framework~\cite{dionSurvey}, where equivalence classes of system instances with a specified  zero/non-zero pattern of the plant matrices are studied. A \emph{feasible} IP in this context corresponds to a communication pattern between outputs and inputs that ensures, for almost all plant instances with the specified structural pattern, arbitrary spectrum assignment of the corresponding closed-loop system is achievable through static output feedback constrained to the IP.
For a specified structural system, conditions that verify whether  an IP is feasible or not were provided in~\cite{sezerFixedModes,Sezer1981641,Pichai3}, and used for the design of feasible IPs in~\cite{PequitoJournal,PequitoNecsys13,PequitoACC15,Pajic}. Further, in~\cite{PequitoACC15} it was shown that the problem of determining the minimum cost feasible IPs, given a system plant and an input/output configuration,  is NP-hard. In particular, when we restrict the problem to one with a uniform cost on the communication links, we obtain the problem of determining sparsest feasible IPs, which is also known to be NP-hard. Nonetheless, in~\cite{PequitoACC15} it was also shown that if the dynamics matrix is irreducible then the minimum cost feasible IPs can be determined by resorting to algorithms with polynomial complexity (in the dimensions of the state, input and output). In this paper, one of the goals is to provide insights on how the  conditions required to ensure feasibility contribute to the hardness of the problem of determining the sparsest feasible IPs, as well as the essential IPs.  Finally, we notice that the essential IPs provide new insights on how to obtain  solutions to the general minimum cost  IP design, in the same lines as~\cite{PequitoACC15,Trave,optimumfeedbackpattern}, and to obtain resilient properties of the closed-loop system with respect to a given IP.

Some meaningful advances were recently achieved in terms of determining the numerical gains to achieve desired closed-loop system performance, given the existence of feasible IPs, and  accomplished by using convex optimization tools~\cite{Rotkowitz}. More precisely, gains associated to so-called \emph{quadratic invariant}~(QI) IPs can be determined using convex optimization tools~\cite{Lessard,LessardAlgControlDec}; see also~\cite{Mahajan} for a review about  recent developments. Recently, these results were also extended to enforcing sparsity in the IP~\cite{ChandrasekaranRegforDesign}, as well as part of a co-design problem with input and output selection~\cite{MatniCoDesign} to ensure that the IP associated with the communication between different decision-makers is QI, allowing for the design of the corresponding gain by  resorting to convex optimization tools. Alternatively, by also resorting to convex optimization tools, several other sparsity-promoting design of IPs were suggested in~\cite{Zecevic,linfarjovTAC13admm}.

The main contributions of this paper are threefold: $(i)$ we provide several necessary and sufficient conditions to verify whether an information pattern is an essential pattern or not; $(ii)$ we show that the problem of determining essential feasible information patterns is NP-hard; and $(iii)$ we provide a set of strategies that, given  an essential information pattern, enable us to determine a collection of essential information patterns.

The rest of the paper is organized as follows. In Section~\ref{probStatement}, we provide the formal problem statement.  Section~\ref{prelim} reviews some concepts and introduces results in structural systems theory and computational complexity. Subsequently, in Section~\ref{mainresults}, we present the main technical results, followed by an illustrative example in Section~\ref{illustrativeexample}. Conclusions and discussions on further research are presented in Section~\ref{conclusions}.

\section{Problem statement}
\label{probStatement}
Consider a linear time-invariant (LTI) system described by
 \begin{equation}%
         \dot x(t) = A x(t) + B u(t), \quad y(t) =C x(t),
     \label{eq:lti}%
 \end{equation}%
where $x\in\R^n$, $u\in\R^p$ and $y\in\R^m$  are the state, input and output vectors, respectively.

In the sequel, we identify the system in~\eqref{eq:lti} with the tuple $(A,B,C)$. In many real world scenarios with, specially in large-scale systems, it is often the case that the exact values of the non-zero parameters of the plant matrices  are unknown, or that these may change over time. To circumvent this problem, in this paper we adopt the framework of structural systems~\cite{dionSurvey}. To this end, we let $\bA \in {\{0,1\}}^{n \times n}$, $\bB\in{\{0,1\}}^{n\times p}$ and $\bC\in{\{0,1\}}^{m\times n}$ be the binary matrices that represent the structural patterns (location of zeros and non-zeros) of $A, B$, and $C$, respectively. We then focus on properties of all systems, where the plant matrices have these sparsity patterns $(\bA,\bB,\bC)$ which we refer to as a \emph{structural system}.

We thus consider the design of \emph{information patterns}  $\bar K\in{\{0,1\}}^{p\times m}$, where  $\bK_{i,j}=1$ if the measurements from output $j$ are available by actuator $i$, and zero otherwise. These patterns induce a sparsity on the static output feedback gains $K\in\R^{p\times m}$, with $u(t)=Ky(t)$ in~\eqref{eq:lti}, that leads to a closed-loop system, which we refer to as $(A,B,K,C)$.

In this setting, a \emph{feasible} information pattern $\bK$ is one that ensures  that the  closed-loop system has no \emph{fixed modes}~\cite{Davison}. To this end, given a matrix $\bM\in{\{0,1\}}^{m\times n}$, denote by $[\bM]$, the set $\{M \in \R^{m\times n}: M_{i,j}=0 \text{ \emph{if} } \bM_{ij}=0\}$. The set of fixed modes of the closed-loop system~\eqref{eq:lti} with respect to (w.r.t.) the information pattern $\bK$ is given by $\sigma_{\bK}(A,B,C)=\bigcap_{K\in [\bK]}\sigma(A+BKC)$, where $\sigma(M)$ denotes the set of eigenvalues of the matrix $M$, further if $\sigma_{\bK}\subset\cW$,  for a non-empty open set $\cW\subset \mathbb{C}$, which is symmetric about the real axis, then there exists a gain $K\in [\bK]$ such that all the eigenvalues of the closed-loop system matrix $A + BKC$ are in $\cW$ (see~\cite{Davison}).

Hereafter, we consider the notion of structural fixed modes introduced in~\cite{Papadimitriou84asimple}, which, essentially, are the fixed modes that arise from the structural pattern of a system. More concretely, a structural LTI system $(\bA,\bB,\bC)$ is said to have structurally fixed modes (SFMs) w.r.t.\  an information pattern $\bK$, which we refer to as $(\bar A,\bar B,\bar K,\bar C)$, if for all $A\in [\bA]$, $B\in [\bB]$, $C\in [\bC]$, we have $\sigma_{\bK}(A,B,C)\neq \emptyset$.

Conversely, a structural system $(\bA,\bB,\bK,\bC)$ has no structurally fixed modes, if there exists at least one instantiation $A\in [\bA]$, $B\in [\bB]$, $C\in [\bC]$ which has no fixed modes (i.e., $\sigma_{\bK}(A,B,C)=\emptyset$). In this latter case, it may be shown (see~\cite{sezerFixedModes}) that almost all closed loop systems in the sparsity class $(\bA,\bB,\bK,\bC)$ have no fixed modes, and, hence, allow pole-placement arbitrarily close to any pre-specified (symmetrical about the real axis) set of eigenvalues by a static output feedback with the sparsity of~$\bK$.

In summary, we choose the non-existence of SFMs as our design criterion for the structure $\bK$. Further, we say that $\bM'$ is a (strict) sub-pattern of $\bM$, which we write $\bM' < \bM$ if $[\bM'] \subsetneq [\bM]$. Therefore, we  aim at computing the \emph{essential} feasible information patterns, i.e., information patterns $\bK$ such that any $\bK'$ with $\bK'< \bK$, implies that $(\bA,\bB,\bK',\bC)$ has structurally fixed modes, i.e., is unfeasible. Formally, we explore the following problem:

\vspace{2mm}
\subsubsection*{$\cP_1$}
Let  $\bA \in {\{0,1\}}^{n\times n}$, $\bB\in {\{0,1\}}^{n\times p}$ and $\bC \in {\{0,1\}}^{m\times n}$ correspond to the dynamics, input and output matrices, respectively. Determine the essential feasible information patterns $\bK$, that is, $\bK$ such that $(\bA,\bB,\bK,\bC)$ has no structurally fixed modes and there exists no $\bK'$ such that $\bK'< \bK$ and $(\bA,\bB,\bK',\bC)$ has no structurally fixed modes.
\vspace{-0.5cm}

\hfill$\diamond$

Note that a characterization of the essential information feasible patterns yields a characterization of all feasible information patterns, since any feasible information pattern $\bK$ must have $\bK'\le\bK$ for some essential feasible information pattern $\bK'$. Further note that there is a particularly interesting class of essential feasible information patterns which are the sparsest feasible information patterns correspond  to the feasible information patterns that have the lowest  number of non-zero entries.
\vspace{-0.1cm}

\section{Preliminaries and terminology}\label{prelim}
\vspace{-0.1cm}

In this section, we review some basic concepts of structural systems and graph theory, followed by concepts of computational complexity. In addition, we introduce terminology that will be employed throughout the rest of paper.

Consider a linear time-invariant (LTI) system~\eqref{eq:lti}. In order to perform structural analysis efficiently, it is customary to associate to~\eqref{eq:lti} a directed graph (digraph) $\cD=(\cV,\cE)$,  in which $\cV$ denotes the set of \textit{vertices} and $\cE\subseteq\cV\times\cV$ the set of \textit{edges}, where $(v_j,v_i)$ represents an edge from the vertex $v_j$ to vertex $v_i$. To this end, let $\bA\in{\{0,1\}}^{n\times n}$, $\bB\in{\{0,1\}}^{n\times p}$ and $\bC\in{\{0,1\}}^{m\times n}$ be binary matrices that represent the sparsity patterns of $A$, $B$ and $C$ respectively. Denote by $\cX=\{x_1,\ldots,x_n\}$, $\cU=\{u_1,\ldots,u_p\}$ and $\cY=\{y_1,\ldots,y_m\}$ the sets of state, input and output vertices, respectively. And by $\cE_{\cX,\cX}=\{(x_i,x_j):\ \bA_{ji}\neq 0\}$, $\cE_{\cU,\cX}=\{(u_j,x_i):\ \bB_{ij}\neq 0\}$, and $ \cE_{\cX,\cY}=\{(x_i,y_j):\ \bC_{ji}\neq 0\}$  the edges between the sets in subscript; further, given an \emph{information pattern} $\bK\in{\{0,1\}}^{p\times m}$, describing  output feedback in the inputs, we also have $\cE_{\cY,\cU}=\{(y_j,u_i):\ \bK_{ij}\neq 0\}$. In addition,  we  introduce \emph{state} digraph $\cD(\bA)=(\cX,\cE_{\cX,\cX})$, and the \emph{closed-loop system } digraph $\cD(\bA,\bB,\bK,\bC)=(\cX\cup \cU\cup \cY,\cE_{\cX,\cX}\cup \cE_{\cU,\cX}\cup \cE_{\cX,\cY} \cup \cE_{\cY,\cU})$.

A \emph{directed path} between the vertices $v_1$ and $v_k$ is a sequence of edges $\{(v_1,v_2),(v_2,v_3),\hdots,(v_{k-1},v_k)\}$. If all the vertices in a directed path are different, then the path is said to be an \emph{elementary path}. A \emph{cycle} is an elementary path from $v_1$ to $v_k$, with an edge from $v_k$ to $v_1$.

We also require the following graph-theoretic notions~\cite{Cormen}: A digraph is strongly connected if there exists a directed path between any two vertices. A \emph{strongly connected component} (SCC) is a maximal subgraph  $\cD_S=(\cV_S,\cE_S)$ of $\cD$, i.e., a graph comprising a set of vertices $\cV'\subseteq\cV$ and of edges $\cE'\subseteq\cE$, such that for every $u,v \in\cV_S$ there exist paths from $u$ to $v$ and is maximal with this property (i.e., any $\cG$ such that $\cD_S\subseteq\cG\subsetneq\cD$, is not strongly connected).

Since the SCCs of a digraph $\cD=(\cV,\cE)$ are uniquely determined, we can regard each SCC as a virtual node. By doing so we build a \textit{directed acyclic graph} (DAG), i.e., a directed graph with no cycles,   in which a directed edge exists between two virtual nodes  representing two SCCs \emph{if and only if} there exists an edge between two vertices in the corresponding SCCs in the original digraph.  We call this, the DAG representation of the graph, which can be computed efficiently in $\cO(|\cV|+|\cE|)$~\cite{Cormen}. We can further classify the SCCs with respect to the existence of incoming and/or outgoing edges as follows.

\begin{definition}[\cite{PequitoJournal}]\label{linkedSCC}
    An SCC is said to be linked if it has at least one incoming or outgoing edge from another SCC\@. In particular, an SCC is \textit{non-top linked} if it has no incoming edges  from  another SCC, and \textit{non-bottom linked} if it has no outgoing edges to another SCC\@.
\hfill $\diamond$
\end{definition}

For any digraph $\cD = (\cV,\cE)$ and any two vertex sets $\cS_{1}, \cS_{2}\subset \cV$ we define the \textit{bipartite graph} $\cB(\cS_1,\cS_2,\cE_{\cS_1,\cS_2})$ whose vertex set is given by $\cS_{1}\cup \cS_{2}$ and the edge set $\cE_{\cS_1,\cS_2}=\cE\cap(\cS_1\times\cS_2)$. We call the bipartite graph $\cB(\cV,\cV,\cE)$ the bipartite graph associated with $\cD(\cV,\cE)$. In the sequel we will make heavy use of the \emph{state bipartite graph} $\cB(\bA)\equiv \cB(\cX,\cX,\cE_{\cX,\cX})$, which is the bipartite graph associated with the state digraph $\cD(\bA)=(\cX,\cE_{\cX,\cX})$.

Given $\cB(\cS_1,\cS_2,\cE_{\cS_1,\cS_2})$, a matching $M$ corresponds to a subset of edges in $\cE_{\cS_1,\cS_2}$ so that no two edges have a vertex in common, (i.e.,~given edges $e=(s_1,s_2)$ and $e'=(s_1',s_2')$ with $s_1,s_1' \in \cS_1$ and $s_2,s_2'\in \cS_2$, $e, e' \in M$ only if $s_1\neq s_1'$ and $s_2\neq s_2'$).
A maximum matching $M^{\ast}$ is a matching $M$ that has the largest number of edges among all possible matchings.

In addition, given two binary matrices $P$ and  $P'$, we define their sum $P + P'$, where we replace the binary sum with the entrywise \emph{or} operation.

We call the vertices in $\cS_1$ and $\cS_2$ belonging to an edge in $M^\ast$, the \textit{matched vertices} w.r.t.\ $M^\ast$, and  \textit{unmatched vertices} otherwise. 
For ease of referencing, in the sequel, the term \emph{right-unmatched vertices} associated with the matching $M$ of $\mathcal B(\mathcal S_1,\mathcal S_2,\mathcal E_{\mathcal S_1,\mathcal S_2})$ (not necessarily maximum), will refer to those vertices in $\cS_{2}$ that do not belong to a matching edge in $M$, dually a vertex from $\cS_1$ that does not belong to an edge in $M$ is called a \emph{left-unmatched vertex}.

The following result translates a maximum matching of the state bipartite graph representation into the state digraph.

\begin{lemma}[Maximum Matching Decomposition~\cite{PequitoJournal}]\label{matMatPathCycle}
    Consider the digraph $\cD(\bA)=(\cX,\cE_{\cX,\cX})$ and let $M^*$ be a maximum matching associated with the bipartite graph $\cB(\cX,\cX,\cE_{\cX,\cX})$. Then, the digraph $\cD=(\cX, M^*)$  comprises a disjoint union of cycles and elementary paths, from the right-unmatched vertices to the left-unmatched vertices of $M^{\ast}$, that span $\cD(\bA)$ (by definition an isolated vertex is regarded as an elementary path with no edges). Moreover, such a decomposition is \emph{minimal}, in the sense that no other spanning subgraph decomposition of $\cD(\bA)$ into elementary paths and cycles contains strictly fewer elementary paths.
    \hfill $\diamond$
\end{lemma}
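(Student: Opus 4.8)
The plan is to prove the two assertions—the path/cycle structure and its minimality—separately, both by exploiting the degree constraints that a matching imposes on the digraph $\cD=(\cX,M^*)$.

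First I would observe that, viewing each matching edge $(x_i,x_j)\in M^*$ as the directed edge $x_i\to x_j$ of $\cD$, every vertex of $\cD$ has out-degree at most one and in-degree at most one. Indeed, in $\cB(\cX,\cX,\cE_{\cX,\cX})$ the vertex $x_i$ occurs on the left in at most one edge of $M^*$ (bounding its out-degree) and on the right in at most one edge (bounding its in-degree). A digraph in which every vertex has in- and out-degree at most one is a vertex-disjoint union of elementary paths and cycles spanning its vertex set: following the unique outgoing edge from any vertex, the forward trajectory is deterministic and cannot re-enter an already visited vertex except to close a cycle, since in-degree at most one forbids two trajectories from merging; and if no cycle closes, finiteness forces the trajectory to terminate at an out-degree-zero vertex. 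This yields the decomposition, with isolated vertices (in- and out-degree zero) serving as the trivial paths. It then remains to identify the endpoints: a vertex begins a path exactly when it has in-degree zero, i.e.\ it never appears as the right endpoint $x_j$ of a matching edge, which is precisely the right-unmatched condition; dually, a path terminates at an out-degree-zero vertex, i.e.\ a left-unmatched vertex. Hence the paths run from right-unmatched to left-unmatched vertices, as claimed.

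For minimality, I would first count the paths in this decomposition. Each path has exactly one source (right-unmatched) and one sink (left-unmatched), while cycles have neither; since $M^*$ matches equally many left and right vertices, the numbers of right- and left-unmatched vertices coincide and equal $n-|M^*|$, so the decomposition contains exactly $n-|M^*|$ paths. Now let any spanning decomposition of $\cD(\bA)$ into vertex-disjoint elementary paths and cycles be given, with $p$ paths. The key step is that the set $M'$ of all edges appearing in this decomposition is itself a matching of $\cB(\cX,\cX,\cE_{\cX,\cX})$: within vertex-disjoint paths and cycles each vertex again has out-degree at most one and in-degree at most one, so no two edges share a left endpoint or a right endpoint. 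Counting vertices, a path with $\ell$ edges covers $\ell+1$ vertices and a cycle with $k$ edges covers $k$ vertices; summing over the spanning decomposition gives (total edges) $+\,p=n$, whence $|M'|=n-p$. Because $M^*$ is a maximum matching, $n-p=|M'|\le|M^*|$, i.e.\ $p\ge n-|M^*|$. Thus no spanning decomposition has fewer than $n-|M^*|$ paths, and the decomposition induced by $M^*$ attains this bound, proving minimality.

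The step I expect to be the crux is the correspondence in the minimality argument—recognizing that an arbitrary path/cycle decomposition yields a bipartite matching whose cardinality is rigidly tied to the path count through the vertex-covering identity, so that maximality of $M^*$ translates directly into minimality of the number of paths. The structural part is comparatively routine once the degree bounds are noted; the only care required there is the bookkeeping for isolated vertices and the correct pairing of source/sink endpoints with the right-/left-unmatched labels.
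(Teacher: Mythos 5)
Your proof is correct. Note that the paper does not prove this lemma at all---it is imported verbatim from~\cite{PequitoJournal}---so there is no in-paper argument to compare against; your two-part argument (in-/out-degree at most one forces a vertex-disjoint union of elementary paths and cycles with sources being the right-unmatched and sinks the left-unmatched vertices, and the counting identity $|M'| = n - p$ turning maximality of $M^*$ into minimality of the path count) is precisely the standard proof given in the cited reference. You also correctly track the paper's orientation convention, under which right-unmatched vertices are those of in-degree zero in $\cD=(\cX,M^*)$ and left-unmatched those of out-degree zero, and you handle isolated vertices as degenerate paths, so there are no gaps.
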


Now, concerning the notion of structural fixed modes introduced in Section~\ref{probStatement} for the formulation of $\cP_1$, we will make heavy use of the following graph-theoretic conditions that ensure the absence of structurally fixed modes.

\begin{theorem}[\hspace{-0.02cm}\cite{Pichai3}]
    The structural system $(\bA,\bB,\bC)$ associated with~\eqref{eq:lti} has no structurally fixed modes w.r.t.\  an information pattern $\bK$, if and only if both the following conditions hold:

 $(a)$ in $\cD(\bA,\bB,\bK,\bC)=(\cX\cup\cU\cup \cY,\cE_{\cX,\cX}\cup\cE_{\cX,\cY}\cup\cE_{\cU,\cX}\cup\cE_{\cY,\cU})$, each state vertex $x\in \cX$ is contained in an SCC which includes an edge of $\cE_{\cY,\cU}$;

 $(b)$ there exists a finite disjoint union of cycles $\mathcal{C}_k=(\mathcal{V}_k,\mathcal{E}_k)$ (subgraph of $\mathcal{D}(\bA,\bB,\bK,\bC)$) with $k\in\mathbb{N}$ such that $\mathcal{X}\subset \bigcup_{j=1}^{k}  \cV_j $.       \hfill$\diamond$

\label{noStrucFixedModes}
\end{theorem}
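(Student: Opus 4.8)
The plan is to translate the absence of structurally fixed modes into two \emph{generic} determinantal conditions and then read each of them off the closed-loop digraph $\cD(\bA,\bB,\bK,\bC)$. The starting point is the determinantal reformulation of the fixed-mode set: since $\sigma_{\bK}(A,B,C)=\bigcap_{K\in[\bK]}\sigma(A+BKC)$, a scalar $\lambda$ lies in $\sigma_{\bK}(A,B,C)$ exactly when $\det(\lambda I - A - BKC)=0$ for every $K\in[\bK]$, i.e.\ when the polynomial $q_\lambda(K):=\det(\lambda I - A - BKC)$ vanishes identically in the free entries of $K$. Consequently, the structural system has no structurally fixed modes precisely when, for almost all $(A,B,C)\in[\bA]\times[\bB]\times[\bC]$, there is no $\lambda\in\mathbb{C}$ with $q_\lambda(\cdot)\equiv 0$. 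I would organize the whole argument around the two ways such a common root can be forced: a rank deficiency at a fixed $\lambda$, and a connectivity obstruction that decouples a block of the spectrum from $K$.

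I would first dispatch condition $(b)$ by analyzing $q_0(K)=\det(A+BKC)$. The generic zero/non-zero pattern of $A+BKC$ is the Boolean sum $\bA+\bB\bK\bC$, because the $(i,l)$ entry of $BKC$ is generically non-zero iff there is a feedback path $x_l\to y_k\to u_j\to x_i$ in $\cD(\bA,\bB,\bK,\bC)$. By the standard equivalence between generic rank and maximum matchings, $\det(A+BKC)\not\equiv 0$ iff the effective state graph with edge set $\{(x_l,x_i):(\bA+\bB\bK\bC)_{il}=1\}$ admits a spanning disjoint union of cycles; lifting each effective edge back through its feedback path, this is exactly the disjoint cycle family covering $\cX$ required in $(b)$ (the disjointness of the lifted paths being a generic matching statement in the spirit of Lemma~\ref{matMatPathCycle}). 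Hence if $(b)$ fails then $q_0\equiv 0$, so $0\in\sigma_{\bK}(A,B,C)$ for every instantiation and $0$ is a structurally fixed mode; conversely $(b)$ guarantees $A+BKC$ is generically non-singular, removing the forced eigenvalue at the origin.

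Condition $(a)$ I would obtain from the strongly connected component / DAG decomposition. If some state vertex lies in an SCC $S$ that contains no edge of $\cE_{\cY,\cU}$, then $S$ admits no output-to-input return path internally, and maximality of $S$ prevents any feedback path from leaving and re-entering $S$; thus no $K$-dependent feedback path contributes to the states of $S$. Ordering the SCCs along the DAG and applying the corresponding permutation puts $A+BKC$ in block-triangular form with a diagonal block, associated to $S$, whose entries depend only on $\bA$ and not on $K$; its eigenvalues are therefore common to $\sigma(A+BKC)$ for all $K$ and are structurally fixed modes. Equivalently, such an $S$ exhibits a partition $\cS$ of the feedback channels for which the structured pencil $\begin{bmatrix} A-\lambda I & B_{\cS} \\ C_{\bar{\cS}} & 0\end{bmatrix}$ drops rank generically at the block's eigenvalues, which is the algebraic signature of a connectivity fixed mode. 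This shows $(a)$ is necessary and identifies it as the condition ruling out fixed modes coming from spectrum decoupled from the loop.

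The remaining and hardest direction is sufficiency: assuming both $(a)$ and $(b)$, I must exhibit (generically) a realization with $\sigma_{\bK}(A,B,C)=\emptyset$, equivalently show that no $\lambda$ is a common root of all $q_\lambda(\cdot)$. My plan is to use the spanning cycle family from $(b)$ to fix a base realization in which $A+BKC$ is non-singular with a spanning cycle structure, and then to use $(a)$ to route, inside each SCC, at least one feedback edge through which an entry of $K$ enters the characteristic polynomial of that component. The goal is to prove that, as $K$ varies over $[\bK]$, the induced map from $K$ to the coefficient vector of $\det(\lambda I - A - BKC)$ is dominant, since surjectivity of this coefficient map is equivalent to arbitrary pole assignment and hence to $\sigma_{\bK}=\emptyset$. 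The main obstacle is exactly this genericity / algebraic-independence step: one must show that $(a)$ and $(b)$ \emph{jointly} prevent any unavoidable cancellation among the cycle-family terms in the determinant expansion, so that the characteristic coefficients are non-degenerate functions of the $K$-entries. This is where the combinatorial bookkeeping is delicate, since a cycle family witnessing $(b)$ and the feedback edges witnessing $(a)$ must be combined into vertex-disjoint structures realizing the coefficients independently; I would handle it by induction over the DAG of SCCs, treating each component (which by $(a)$ contains a feedback edge and by $(b)$ is cycle-covered) as a pole-assignable building block and composing the blocks along the acyclic order.
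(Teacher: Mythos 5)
First, note that the paper does not prove Theorem~\ref{noStrucFixedModes}: it is imported from~\cite{Pichai3} without proof, so there is no in-paper argument to compare against. Judged on its own, your attempt has two genuine gaps.

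The first is in your treatment of condition $(b)$. You reduce generic nonsingularity of $A+BKC$ to a spanning cycle family of the ``effective'' state graph with Boolean pattern $\bA+\bB\bK\bC$, invoking the generic-rank-equals-maximum-matching principle. That principle requires the nonzero entries to be algebraically independent, and the entries of $BKC$ are not: two effective edges realized through a common output or input vertex produce identical monomials of opposite sign in the determinant expansion, which cancel for \emph{every} instantiation. Take $n=2$, $p=m=1$, $\bA=0$, $\bB=(1,1)^{\tr}$, $\bC=(1,1)$, $\bK=1$: the effective pattern is all ones and admits a spanning cycle family, yet $\det(A+BKC)=\det(BKC)\equiv 0$ since $BKC$ has rank one; consistently, condition $(b)$ fails because the two candidate cycles share $y_1$ and $u_1$. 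This is precisely why $(b)$ is phrased in terms of vertex-disjoint cycles in the full closed-loop digraph $\cD(\bA,\bB,\bK,\bC)$ rather than as a matching condition on the effective state graph; the step you defer to ``a generic matching statement in the spirit of Lemma~\ref{matMatPathCycle}'' is the actual content to be proved (via the expansion of $\det(A+BKC)$ into terms indexed by disjoint cycle families of the closed-loop digraph), not a routine lifting.

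The second gap is sufficiency, which you outline but do not prove, and whose stated target is wrong. Dominance of the map from $K$ to the coefficients of $\det(\lambda I-A-BKC)$ is strictly stronger than $\sigma_{\bK}(A,B,C)=\emptyset$: absence of fixed modes only requires that for each $\lambda$ there exist \emph{some} $K\in[\bK]$ with $\det(\lambda I-A-BKC)\neq 0$, whereas dominance demands near-arbitrary coefficient assignment and already fails whenever $\bK$ has fewer than $n$ free entries --- a situation fully compatible with $(a)$ and $(b)$ (e.g.\ a single scalar feedback loop closing one long cycle through all $n\ge 2$ states). So the induction you sketch would be attempting to establish a false statement. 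The standard route, and the one underlying~\cite{Pichai3}, is instead to invoke the algebraic characterization of fixed modes by rank deficiency of bordered pencils associated with complementary subsets of feedback channels, and to show that $(a)$ and $(b)$ force each such pencil to have full generic rank. What does survive from your argument essentially intact is the necessity of $(a)$ via block-triangularization along the condensation of the closed-loop digraph, and the claim that failure of $(b)$ forces $0$ to be a structurally fixed mode.
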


Finally, a (computational) problem is said to be \emph{reducible in polynomial time} to another if there exists a procedure transforming a solution of the former in one of the latter resorting to a number of elementary operations which is bounded by a polynomial on the size of its inputs. Such reductions are useful in determining the  complexity class~\cite{Garey:1979:CIG:578533} a problem belongs to. For instance, recall that a decision problem $\cP$ in NP (i.e., the class of problems for which a solution can be verified in polynomial time) is said to be NP-complete if all other decision problems in NP can be polynomially reduced to $\cP$~\cite{Garey:1979:CIG:578533}. The set of NP-complete problems is referred to as the NP-complete class.

The optimization problems, whose associated decision problems are NP-complete are called NP-hard, and they form the NP-hard class. A typical result used to show the computational complexity of a problem is given next.

\begin{lemma}[\cite{Garey:1979:CIG:578533}]
    If a problem $\cP_A$ is NP-hard, $\cP_B$ is in NP and $\cP_A$ is reducible in polynomial time to $\cP_B$, then $\mathcal P_B$ is NP-hard.
    \hfill $\diamond$
\label{NPcompRed}
\end{lemma}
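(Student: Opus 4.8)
The plan is to derive the claim directly from the definitions, the whole content being the \emph{transitivity} of polynomial-time reductions. Throughout, write $\cP_1 \le_p \cP_2$ to mean that $\cP_1$ is reducible in polynomial time to $\cP_2$. By definition, $\cP_A$ being NP-hard means that every decision problem $\cP \in \mathrm{NP}$ satisfies $\cP \le_p \cP_A$. The target is therefore to show that, for an arbitrary $\cP \in \mathrm{NP}$, one also has $\cP \le_p \cP_B$; establishing this for all such $\cP$ is exactly the statement that $\cP_B$ is NP-hard.

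First I would fix an arbitrary $\cP \in \mathrm{NP}$ and name the two reductions in hand: a map $f$ witnessing $\cP \le_p \cP_A$, computable within a polynomial time bound $q$, and a map $g$ witnessing the hypothesis $\cP_A \le_p \cP_B$, computable within a polynomial time bound $r$. The candidate reduction from $\cP$ to $\cP_B$ is the composition $g \circ f$, sending an instance $w$ of $\cP$ to the instance $g(f(w))$ of $\cP_B$. Its correctness, namely that $w$ is a yes-instance of $\cP$ if and only if $g(f(w))$ is a yes-instance of $\cP_B$, follows by chaining the two reduction equivalences, so the only substantive issue is the running-time bound.

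The crux --- the step I expect to be the only real obstacle --- is verifying that $g \circ f$ runs in polynomial time, the subtlety being that the cost of evaluating $g$ depends on the \emph{size} of the intermediate instance $f(w)$ rather than on $|w|$ directly. Here I would invoke the standard observation that a procedure halting within $q(|w|)$ steps can emit at most $q(|w|)$ symbols, whence $|f(w)| \le q(|w|)$. Computing $g$ on $f(w)$ therefore costs at most $r(|f(w)|) \le r(q(|w|))$ operations, and the total cost of $g \circ f$ is bounded by $q(|w|) + r(q(|w|))$. Since a sum and a composition of polynomials is again a polynomial, this is polynomial in $|w|$, which gives $\cP \le_p \cP_B$.

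Because $\cP$ was arbitrary in $\mathrm{NP}$, every problem in NP reduces in polynomial time to $\cP_B$; together with the hypothesis $\cP_B \in \mathrm{NP}$ this certifies that (the decision version of) $\cP_B$ is NP-complete, which is precisely the paper's criterion for declaring $\cP_B$ NP-hard, closing the argument. The broader point worth flagging is that nothing in the proof uses any structure of $\cP_A$ or $\cP_B$ beyond the stated polynomial time bounds: the lemma is, in essence, just the closure of $\le_p$ under composition, which is exactly what makes polynomial-time reducibility the right vehicle for propagating hardness to the problems $\cP_1$ studied in the remainder of the paper.
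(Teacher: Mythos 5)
Your argument is correct: it is the standard transitivity-of-$\le_p$ proof, with the one genuinely delicate point (that $|f(w)|$ is bounded by the running time of $f$, so the composed reduction runs in time $q(|w|)+r(q(|w|))$) handled properly, and the final appeal to $\cP_B\in\mathrm{NP}$ matches the paper's convention of defining NP-hardness of an optimization problem via NP-completeness of its decision version. The paper itself gives no proof of this lemma --- it is imported verbatim from Garey and Johnson --- so there is nothing to compare against; your writeup is the standard textbook argument and is complete.
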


Concretely, we will make use the \emph{decomposition problem}, an NP-hard problem that is formulated as follows~\cite{tarjan}:

\subsubsection*{Decomposition Problem}

Given a directed acyclic graph $\cD=(\cV,\cE)$. Determine (if possible) a partition of $\cV$ in two sets, say $\Gamma_1$ and $\Gamma_2$ such that:

$(i)$ there are no edges leading from $\Gamma_2$ to $\Gamma_1$, and

$(ii)$ if $v \in \Gamma_i$, for $i\in\{1,2\}$, there is a \emph{source-to-sink} path in $\cD$ passing through $v$ containing only vertices of $\Gamma_i$,
where a vertex is a \emph{source} if it has no incoming edges and a \emph{sink} if it has no outgoing edges.
\hfill $\diamond$

The decomposition problem has been shown to be a computationally hard, as stated in the following theorem:

\begin{theorem}[\cite{tarjan}]\label{decompNP}
    The problem of determining a partition as in the decomposition problem is NP-hard.
    \hfill $\diamond$
\end{theorem}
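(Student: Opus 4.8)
The statement only asserts NP-hardness, so the plan is to exhibit a polynomial-time reduction \emph{from} a known NP-complete problem \emph{to} the decomposition problem and then invoke Lemma~\ref{NPcompRed}. The hypothesis ``$\cP_B$ is in NP'' of that lemma is easy to discharge here: given a candidate partition $(\Gamma_1,\Gamma_2)$ of $\cV$, condition $(i)$ is checked by a single scan of $\cE$, and condition $(ii)$ is checked by, for each $v$, running a reachability search restricted to $\Gamma_i$ (where $v\in\Gamma_i$) to confirm that $v$ lies on a source-to-sink path using only vertices of its own class. Both run in time polynomial in $|\cV|+|\cE|$, so the decomposition problem lies in NP. The substance of the proof is therefore the construction and correctness of the reduction.

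Before building it, I would first reformulate the constraints to expose their combinatorial content. Condition $(i)$ forces $\Gamma_1$ to be \emph{ancestor-closed} (if $v\in\Gamma_1$ and $(u,v)\in\cE$ then $u\in\Gamma_1$) and dually $\Gamma_2=\cV\setminus\Gamma_1$ to be \emph{descendant-closed}. Under this rigidity, condition $(ii)$ collapses to a cleaner requirement: every $v\in\Gamma_1$ must reach some sink of $\cD$ along a path staying inside $\Gamma_1$, and symmetrically every $v\in\Gamma_2$ must be reachable from some source of $\cD$ along a path staying inside $\Gamma_2$ (the other half of each path is automatic from ancestor-/descendant-closedness). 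Thus the task is to select a single topological ``cut'' of the DAG that leaves both sides path-connected to the global boundary. The presence of two classes, a directional constraint, and a per-vertex covering requirement strongly suggests encoding a Boolean constraint-satisfaction problem, so I would reduce from $3$-SAT.

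The reduction would use two families of gadgets. For each variable I would attach a small sub-DAG whose only freedom, given the ancestor-closed constraint, is the placement of a designated ``literal'' vertex in $\Gamma_1$ versus $\Gamma_2$, with the two options interpreted as the two truth values; chaining the occurrences of a variable through forced edges pins down a single consistent value across all its appearances. For each clause I would attach a sub-DAG joined to the literal vertices and to auxiliary global sources/sinks so that the monochromatic source-to-sink covering of condition $(ii)$ can be completed \emph{if and only if} at least one of the clause's three literal vertices has been placed on the satisfying side. Correctness is then argued in both directions: a satisfying assignment induces the corresponding cut, and one verifies that every vertex of each gadget acquires the required in-class path to the boundary; conversely, any valid partition must, by the forced structure, read off a consistent assignment that satisfies every clause gadget.

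I expect the main obstacle to be the gadget design itself, specifically making condition $(ii)$ capture clause satisfaction exactly while remaining compatible with the global rigidity of condition $(i)$. The delicate points are (a) preventing a vertex from meeting its ``reach a sink/source within its own class'' obligation through an \emph{unintended} route that would correspond to an invalid or inconsistent assignment, and (b) enforcing that a variable's value is shared by all its occurrences despite the cut being chosen globally rather than locally. Once the gadgets are tuned so that legal partitions are in bijection (up to the auxiliary vertices) with satisfying assignments, and the construction is seen to have size polynomial in the number of variables and clauses, the reduction is complete and Lemma~\ref{NPcompRed} yields the claimed NP-hardness.
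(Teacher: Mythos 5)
There is a genuine gap here, and it is worth noting first that the paper itself offers no proof to compare against: Theorem~\ref{decompNP} is imported verbatim from the cited reference, so the burden of an actual argument falls entirely on your write-up. Your preliminary observations are sound --- membership in NP is indeed routine, and your reformulation of the constraints is correct: condition $(i)$ forces $\Gamma_1$ to be ancestor-closed and $\Gamma_2$ descendant-closed, and under that rigidity condition $(ii)$ does reduce to ``each $v\in\Gamma_1$ reaches a sink within $\Gamma_1$'' and its dual, since the other half of each source-to-sink path comes for free in a DAG. But everything after that is a statement of intent rather than a proof. You never exhibit the variable gadget, never exhibit the clause gadget, never specify how occurrences of a variable are ``chained through forced edges,'' and never carry out either direction of the correctness argument. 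You yourself identify the gadget design as ``the main obstacle'' and the points (a) and (b) you list --- blocking unintended in-class escape routes to the boundary, and synchronizing a variable's value across occurrences when the cut is a single global ancestor-closed set --- are precisely where such reductions typically succeed or fail. A reduction from $3$-SAT whose gadgets are described only by the properties they are required to have does not establish NP-hardness; it restates the problem.

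Concretely, the missing content is the entire combinatorial core: an explicit polynomial-size DAG $\cD_\varphi$ built from a $3$-CNF formula $\varphi$, a verification that every valid partition of $\cD_\varphi$ induces a well-defined truth assignment (this is where the global, non-local nature of the ancestor-closed cut makes consistency across occurrences nontrivial), and a verification that the per-vertex source-to-sink obligations of condition $(ii)$ are satisfiable exactly when every clause is satisfied. Until those gadgets are written down and both directions checked, the invocation of Lemma~\ref{NPcompRed} has nothing to act on. If you want to complete this, either construct and verify the gadgets explicitly, or do what the paper does and defer to the published proof in the cited reference rather than presenting an unfinished reduction as if it were one.
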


Further, to clarify the distinction between NP-hard and NP-complete problems, we remark that a decision problem associated to the decomposition problem can be described as follows:  for an arbitrary digraph $\cD$ is there a partition of the set of vertices into two disjoint non-empty subsets $\Gamma_1,\Gamma_2$ satisfying conditions $(i)$ and $(ii)$.
\vspace{-0.1cm}

\section{Main results}\label{mainresults}
\vspace{-0.1cm}

In this section, we present the main results of this paper. We begin by addressing the problem of characterizing the  essential information  patterns by providing a description of those satisfying  condition $(b)$ of Theorem~\ref{noStrucFixedModes}. To this end, Theorem~\ref{informationCondb} and Corollary~\ref{cor:informationCondb} summarize the obtained results. Secondly, in Theorem~\ref{correachpair}, we consider the satisfiability of condition $(a)$ of Theorem~\ref{noStrucFixedModes}. Next, in Theorem~\ref{thm:general-np-comp} we show that the problem of finding essential  feasible information  patterns is in general NP-hard, despite of the previous theorems hinting at efficient (polynomial in the dimension of the state) algorithms to find feedback patterns that satisfy conditions $(a)$ and $(b)$ of Theorem~\ref{noStrucFixedModes} separately. Finally,  we describe in Lemma~\ref{bisection} and Lemma~\ref{splitMethod}, methods by which we can obtain essential    information patterns from pre-existing essential  information patterns.

We thus begin by defining index- and sequential-pairing,  that will be used throughout the remainder of the paper in order to describe how the communication from a set of sensors $\cI$  to a set of actuators $\cJ$ should be setup.

\begin{definition}[Index-pairing]
    Given two sets of indices $\cI=\{i_1,\ldots,i_n\}$ and $\cJ=\{j_1,\ldots,j_k\}$, we define an index pairing $\langle \cI , \cJ \rangle$ as being a maximum matching of the bipartite graph $\cB(\cI,\cJ,\cI\times\cJ)$.
    \hfill $\diamond$
\end{definition}

\begin{definition}[Sequential-pairing]
    Consider  two sets of indices $\cI=\{i_1,\ldots,i_n\}$ and $\cJ=\{j_1,\ldots,j_n\}$, and  a maximum matching $M$ of the bipartite graph $\cB(\cJ,\cI,\cE_{\cJ,\cI})$, where  $\cE_{\cJ,\cI} \subseteq \cJ\times\cI$. We denote by $|\cI,\cJ\rangle_M$ a sequential-pairing induced by $M$, defined as follows:\vspace{-0.6cm}

    $$
        |\cI,\cJ\rangle_M=\Bigg(\bigcup_{l=2,\ldots,k}\{(i_l,j_{l-1})\}\Bigg) \cup \big\{(i_1,j_k)\big\},
    $$\vspace{-0.4cm}

\noindent  where  $(j_l,i_l)\in M$, for $l=1,\dots,k$.
    \hfill $\diamond$
\end{definition}
We note that the sequential-pairing consists in  the collection of edges such that $M\cup |\cI,\cJ\rangle_M$ forms  a cycle.

Now, we begin by providing  necessary and sufficient conditions to ensure Theorem~\ref{noStrucFixedModes}--$(b)$.

\begin{theorem}\label{informationCondb}
    Let $\cD(\bA)=(\cX,\cE_{\cX,\cX})$ be the state digraph and $\cB(\bA)$ the associated  state bipartite graph. In addition, let the input and output matrices be given by $\bB=\mathbb{I}_n$ and $\bC=\mathbb{I}_n$, respectively. The following statements are equivalent:

$(i)$ The digraph $\cD(\bA,\mathbb{I}_n,\bK, \mathbb{I}_n)$ satisfies Theorem~\ref{noStrucFixedModes}$-(b)$;

$(ii)$ There exists a matching $M$ of $\cB(\bA)$, with a set of right-unmatched vertices $\cU_R = \left\{ x_{j_R} : j_R\in \cJ_R \right\}$ and left-unmatched vertices $\cU_L = \left\{ x_{j_L} : j_L\in \cJ_L \right\}$,  where $\cJ_R$ and $\cJ_L$ correspond to the indices of right- and left-unmatched vertices, respectively, such that  $\bK_{j_R,j_L}=1$ for all $(j_R,j_L)$ in some index-pairing $\langle\cJ_R,\cJ_L\rangle$  and is zero otherwise. \hfill $\diamond$
\end{theorem}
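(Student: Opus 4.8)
The plan is to exploit the rigid structure that $\bB=\mathbb{I}_n$ and $\bC=\mathbb{I}_n$ impose on $\cD(\bA,\mathbb{I}_n,\bK,\mathbb{I}_n)$. Under these choices the only edges from $\cX$ into $\cY$ are the self-edges $x_i\to y_i$, the only edges from $\cU$ into $\cX$ are $u_i\to x_i$, and the feedback edges $\cE_{\cY,\cU}$ are exactly $\{(y_j,u_i):\bK_{ij}=1\}$. I would first record the resulting \emph{normal form}: every cycle of the closed-loop digraph is a concatenation of maximal runs of state-to-state edges (lying inside $\cD(\bA)$) glued together by \emph{feedback jumps} $x_a\to y_a\to u_b\to x_b$, and such a jump is available precisely when $\bK_{b,a}=1$. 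Both implications then reduce to translating between these feedback jumps and the unmatched vertices of a matching of $\cB(\bA)$.

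For $(ii)\Rightarrow(i)$ I would start from the matching $M$ and apply the path/cycle decomposition of the subgraph $(\cX,M)$ (the decomposition part of Lemma~\ref{matMatPathCycle}, which holds for any matching, not only a maximum one): it splits $\cD(\bA)$ into disjoint $M$-cycles together with disjoint elementary paths running from the right-unmatched vertices $\{x_{j_R}\}$ to the left-unmatched vertices $\{x_{j_L}\}$. Because a matching of $\cB(\cX,\cX,\cE_{\cX,\cX})$ leaves equally many unmatched vertices on each side, $|\cJ_R|=|\cJ_L|$, so the index-pairing $\langle\cJ_R,\cJ_L\rangle$ is in fact a bijection. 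I would then close each path by routing its terminal (left-unmatched) vertex $x_{j_L}$ back to the initial (right-unmatched) vertex $x_{j_R}$ of another path through the jump $x_{j_L}\to y_{j_L}\to u_{j_R}\to x_{j_R}$, which exists since $\bK_{j_R,j_L}=1$. As the pairing is a bijection these jumps chain the paths into a disjoint union of cycles, and distinct jumps consume distinct $\cY$- and $\cU$-vertices; together with the $M$-cycles they yield vertex-disjoint cycles covering $\cX$, establishing Theorem~\ref{noStrucFixedModes}$-(b)$.

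Conversely, for $(i)\Rightarrow(ii)$, given disjoint cycles covering $\cX$ I would let $M$ be the set of state-to-state edges appearing in them. Vertex-disjointness forces each state to have at most one incoming and one outgoing state-edge in $M$, so $M$ is a matching of $\cB(\bA)$; a state is right-unmatched exactly when its cycle enters it by a jump $u_\cdot\to x$, and left-unmatched exactly when its cycle leaves it by $x\to y_\cdot$. The feedback jumps actually used then pair each left-unmatched index with a right-unmatched index bijectively, i.e.\ they realize an index-pairing $\langle\cJ_R,\cJ_L\rangle$ along which $\bK_{j_R,j_L}=1$.

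The step I expect to be the main obstacle is the clause \emph{``and is zero otherwise.''} The correspondence above only shows that $\bK$ \emph{supports} such a bijection, whereas a pattern with surplus ones (e.g.\ the all-ones $\bK$, which admits the cycles $x_i\to y_i\to u_i\to x_i$) also satisfies $(b)$. I would therefore pin the exact equality down through a minimality/essentiality argument: any entry of $\bK$ not traversed by some admissible bijection is superfluous, and deleting it leaves a strictly smaller pattern still admitting a cycle cover, so an essential pattern can carry no such entry. Alongside this, the error-prone bookkeeping to watch is the orientation convention (right-unmatched $\leftrightarrow$ no incoming $M$-edge $\leftrightarrow$ entered by a feedback jump, dually for left-unmatched), kept consistent with Lemma~\ref{matMatPathCycle}, and the cardinality identity $|\cJ_R|=|\cJ_L|$ that guarantees $\langle\cJ_R,\cJ_L\rangle$ is a genuine bijection rather than a partial matching.
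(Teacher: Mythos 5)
Your proposal follows essentially the same route as the paper: both directions rest on the correspondence, via Lemma~\ref{matMatPathCycle}, between the state-to-state edges of a disjoint cycle cover and a (not necessarily maximum) matching of $\cB(\bA)$, with the equality $|\cJ_R|=|\cJ_L|$ and the feedback jumps $x_{j_L}\to y_{j_L}\to u_{j_R}\to x_{j_R}$ closing the unmatched-to-unmatched paths into cycles. Your concern about the clause ``and is zero otherwise'' is well founded --- the paper's proof of $(i)\Rightarrow(ii)$ only extracts the entries $\bK_{j_R,j_L}=1$ and never argues the remaining entries vanish (the all-ones $\bK$ satisfies $(i)$ yet is not literally of the form in $(ii)$), so your reading of $(ii)$ as characterizing the minimal patterns satisfying condition $(b)$ is a reasonable repair of a gap the paper itself leaves open rather than a defect of your argument.
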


\begin{proof}
    To see that $(i)$ implies $(ii)$, consider a collection of disjoint cycles $\cC$ of $\cD\equiv \cD(\bA,\mathbb{I}_n,\bK, \mathbb{I}_n)$ that contains all the state variables, as prescribed by Theorem~\ref{noStrucFixedModes}$-(b)$. Further, denote by $\cE_{\cX,\cX}^c\subset \cE_{\cX,\cX}$  the set of edges between state vertices that are used in the cycles comprising $\cC$. Now, set $M=\cE_{\cX,\cX}^c$, where it can be seen (recall Lemma~\ref{matMatPathCycle}) that $M$ is a matching of $\cB(\bA)$; so, let $\cU_R$ and $\cU_L$ be the associated sets of right- and left-unmatched vertices, respectively. Therefore, because $\cC$ is a decomposition of $\cD(\bA,\mathbb{I}_n,\bK,\mathbb{I}_n)$ in cycles, it follows that there exists a set of inputs $\mathbb{I}_n^{\cJ_R}$ and outputs $\mathbb{I}_n^{\cJ_L}$ leading to edges from the inputs and outputs in $\cD$ labeled by $\cJ_R$ and $\cJ_L$, respectively. Further,  the inputs have outgoing edges into the state variables in $\cU_R$,  and the outputs with incoming edges from the state variables in $\cU_L$, respectively.  At last, because $\cC$ is a decomposition of $\cD$ in cycles,  there exists a feedback edge from the outputs labeled by $\cJ_L$ into the inputs labeled by $\cJ_R$; in other words, $\bK_{j_R,j_L}=1$  with $(j_R,j_L)\in \langle\cJ_R,\cJ_L\rangle$, and the result follows.

    On the other hand, $(ii)$ implies $(i)$ since, by Lemma~\ref{matMatPathCycle}, it follows that $M$ is such that $\mathcal D_M=(\mathcal X,M)$ spans $\cD(\bA)$ with a disjoint collection of cycles $\cC$ and paths $\cP$, where the paths start from vertices in $\cU_R$ and end in vertices in $\cU_L$. Further, $\cB(\bA)=(\cX,\cX,\cE_{\cX,\cX})$, for a given matching $M$ of $\cB(\bA)$,  we have $|\cU_R| = |\cU_L|$ and so $\langle\cJ_R,\cJ_L\rangle$ is a perfect matching, thus, since $\bK_{j_R,j_L}=1$ for $(j_R,j_L)\in \langle\cJ_R,\cJ_L\rangle$, it is possible to extend the paths in $\cP$ into cycles in $\cD(\bA,\mathbb I_n,\bK,\mathbb I_n)$ comprising input vertices, output vertices and feedback links. More precisely, those inputs with edges ending in $\cU_R$ and outputs with incoming edges starting in $\cU_L$. These cycles are disjoint from the cycles in $\cC$ (by Lemma~\ref{matMatPathCycle}), and together they form a family of cycles that contains all of the state vertices in $\cD(\bA,\mathbb{I}_n,\bK,\mathbb{I}_n)$.
\end{proof}

The sparsest information pattern satisfying Theorem~\ref{noStrucFixedModes}--$(b)$ can be obtained, as corollary to Theorem~\ref{informationCondb}, as described next.

\begin{corollary}\label{cor:informationCondb}
    Let $\cD(\bA)=(\cX,\cE_{\cX,\cX})$ be the state digraph and $\cB(\bA)$ the associated  state bipartite graph. In addition, let  the input and output matrices be  given by $\bB=\mathbb{I}_n$ and $\bC=\mathbb{I}_n$, respectively. The following statements are equivalent:

$(i)$  The digraph  $\cD(\bA,\mathbb{I}_n,\bK^*, \mathbb{I}_n)$, where  $\bK^*$ is a sparsest information pattern, satisfies Theorem~\ref{noStrucFixedModes}$-(b)$;

$(ii)$ There exists a maximum matching $M^*$ of $\cB(\bA)$, with set of right-unmatched vertices $\cU_R^*$ and left-unmatched vertices $\cU_L^*$, where $\cJ_R^*$ and $\cJ_L^*$  are the indices of the state variables comprised in each, respectively, such that  $\bK_{j_R,j_L}=1$ if $(j_R,j_L)\in \langle\cJ_R^*,\cJ_L^*\rangle$ and zero otherwise.
        \hfill $\diamond$
\end{corollary}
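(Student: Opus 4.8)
The plan is to derive Corollary~\ref{cor:informationCondb} directly from Theorem~\ref{informationCondb} by specializing the arbitrary matching $M$ appearing there to a \emph{maximum} matching $M^*$, and then arguing that this specialization yields precisely the sparsest feasible pattern. Concretely, both statements $(i)$ and $(ii)$ of the corollary are instances of the corresponding statements in Theorem~\ref{informationCondb}, so the equivalence $(i)\Leftrightarrow(ii)$ will follow once I show that (a) for a maximum matching the construction of Theorem~\ref{informationCondb}$-(ii)$ still goes through verbatim, and (b) the pattern $\bK^*$ produced from $M^*$ has strictly the fewest nonzero entries among all patterns satisfying Theorem~\ref{noStrucFixedModes}$-(b)$.

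For part (a), I would note that a maximum matching is in particular a matching, so Theorem~\ref{informationCondb} applies directly: with $\cU_R^*,\cU_L^*$ the right- and left-unmatched vertices of $M^*$ and $\cJ_R^*,\cJ_L^*$ their index sets, setting $\bK^*_{j_R,j_L}=1$ exactly on an index-pairing $\langle\cJ_R^*,\cJ_L^*\rangle$ and zero otherwise makes $\cD(\bA,\mathbb{I}_n,\bK^*,\mathbb{I}_n)$ satisfy condition $(b)$. This gives one direction and shows the candidate pattern is feasible.

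For part (b), the key quantitative observation is that the number of nonzero entries of any $\bK$ produced by the Theorem~\ref{informationCondb} construction equals $|\langle\cJ_R,\cJ_L\rangle|=|\cU_R|=|\cU_L|$, i.e.\ the number of unmatched vertices of the matching $M$. By Lemma~\ref{matMatPathCycle} (the Maximum Matching Decomposition), a maximum matching $M^*$ minimizes the number of elementary paths in any spanning path-and-cycle decomposition of $\cD(\bA)$, equivalently it minimizes the number of right-unmatched (and left-unmatched) vertices. Since every $\bK$ satisfying condition $(b)$ arises, via Theorem~\ref{informationCondb}, from \emph{some} matching $M$ whose cardinality of unmatched vertices is at least that of $M^*$, the pattern $\bK^*$ built from $M^*$ achieves the minimum possible number of nonzeros, so it is a sparsest feasible pattern. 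I would also remark that an index-pairing $\langle\cJ_R^*,\cJ_L^*\rangle$ is automatically a perfect matching of $\cB(\cJ_R^*,\cJ_L^*,\cJ_R^*\times\cJ_L^*)$ because $|\cU_R^*|=|\cU_L^*|$, so the $\bK^*$ constructed indeed has exactly $|\cU_R^*|$ nonzeros and no fewer entries can be dropped.

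The main obstacle I anticipate is the sparsity-optimality argument in part (b): one must carefully rule out feasible patterns $\bK$ that do \emph{not} obviously come from the canonical construction of Theorem~\ref{informationCondb}, or that might reuse a single feedback edge to close more than one cycle. The clean way around this is to invoke Theorem~\ref{informationCondb} as a genuine \emph{characterization} — every feasible $\bK$ corresponds to some matching $M$, and the nonzero count is bounded below by the unmatched-vertex count of $M$, which Lemma~\ref{matMatPathCycle} bounds below by the unmatched-vertex count of the maximum matching $M^*$. Thus the minimization over feasible patterns reduces to a minimization over matchings, where maximum matchings are optimal by the minimality clause of Lemma~\ref{matMatPathCycle}, and the corollary follows.
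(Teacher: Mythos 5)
Your proposal is correct and follows essentially the route the paper intends: the paper states this result without a separate proof, presenting it as an immediate consequence of Theorem~\ref{informationCondb} obtained by specializing the matching $M$ to a maximum matching $M^*$, with sparsity following from the fact that a maximum matching minimizes the number of unmatched vertices (the minimality clause of Lemma~\ref{matMatPathCycle}, or equivalently $|\cU_R|=n-|M|$). Your additional care in treating Theorem~\ref{informationCondb} as an exact characterization so as to lower-bound the nonzero count of every feasible pattern is exactly the right way to make the ``sparsest'' claim rigorous.
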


Now, we focus on the problem of determining the feedback patterns $\bK$ satisfying Theorem~\ref{noStrucFixedModes}--$(a)$. Towards this goal, consider the following auxiliary lemma.

\begin{lemma}\label{reachpair}
    Consider $(\bA,\bB,\bC)$ to be  a structural system with $\bB=\bC=\mathbb{I}_n$, and $\cD(\bA)$ the state digraph. Further, let the non-top linked SCCs of $\cD(\bA)$ be denoted by $\cN^T_1,\dots,\cN_{\beta_T}^T$, and the non-bottom linked SCCs by $\cN^B_1,\dots,\cN^B_{\beta_B}$. Then, provided $\beta_T\le\beta_B$ (resp. $\beta_B\le\beta_T$),  there exists a information pattern $\bK$ with $\beta_T$ (resp. $\beta_B$) nonzero entries, so that the digraph $\cD(\bA,\bB,\bK,\bC)$ has a unique non-top (resp.~non-bottom) linked SCC and $|\beta_B-\beta_T|$ non-bottom (resp.~non-top) linked SCCs.
    \vspace{-0.3cm}
    
    \hfill $\diamond$
\end{lemma}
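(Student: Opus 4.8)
The plan is to argue entirely on the condensation (DAG of strongly connected components) of the state digraph, after one reduction. Since $\bB=\bC=\mathbb{I}_n$, switching on a single entry $\bK_{ij}=1$ creates in $\cD(\bA,\mathbb{I}_n,\bK,\mathbb{I}_n)$ the chain $x_j\to y_j\to u_i\to x_i$; hence, as far as the SCCs containing state vertices are concerned, turning on $\bK_{ij}$ is equivalent to adding the ``effective'' edge $(x_j,x_i)$ to $\cD(\bA)$ (the pendant input/output vertices attach to their state's component and do not affect this count). The statement thus becomes a purely graph-theoretic augmentation problem: given the condensation of $\cD(\bA)$ with source SCCs $\cN^T_1,\dots,\cN^T_{\beta_T}$ (non-top linked) and sink SCCs $\cN^B_1,\dots,\cN^B_{\beta_B}$ (non-bottom linked), $\beta_T\le\beta_B$, add $\beta_T$ effective edges, each from a sink SCC to a source SCC, so that the resulting condensation has a single source and exactly $\beta_B-\beta_T$ sinks.

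First I would pin down the forced structure of any such solution. Every sink SCC is reachable from some source SCC and every source SCC reaches some sink SCC. A non-bottom linked SCC can lose that status only by acquiring an outgoing edge, so it must be the tail of a feedback edge; with a budget of exactly $\beta_T$ edges this forces the $\beta_T$ tails to lie in $\beta_T$ \emph{distinct} sink SCCs, which is precisely what leaves $\beta_B-\beta_T$ surviving sinks. Dually, the heads must be placed in the source SCCs so as to destroy their ``no incoming edge'' property. The construction I would use is the cyclic stitching provided by the sequential-pairing introduced above: pick for each source a reachable sink, and close the resulting source-to-sink paths into cycles, so that all touched source SCCs collapse into one SCC $M$. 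Since every remaining SCC is reachable from some source and hence from $M$, the component $M$ becomes the unique non-top linked SCC, while the sinks not used as tails retain their terminal status and furnish the $\beta_B-\beta_T$ non-bottom linked SCCs; this case is a direct application of Lemma~\ref{matMatPathCycle}-style cycle reasoning together with Theorem~\ref{noStrucFixedModes}.

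The step I expect to be the crux is guaranteeing that a suitable pairing exists and that it controls the source and sink counts \emph{simultaneously}. A single cycle through all sources requires a system of distinct representatives, i.e.\ a matching of the source-to-sink reachability bipartite graph saturating the source side, and this can fail Hall's condition even when $\beta_T<\beta_B$ (for instance several sources all reaching a single common sink). When the reachability matching is deficient I would fall back on a staged linking: anchor one suitably chosen source into a cycle with one of its reachable sinks, then attach each remaining source by routing a fresh reachable sink into it, each stage enlarging the set of sinks reachable from the growing root $M$. The delicate, and genuinely hard, point is to schedule this staging so that (i) exactly $\beta_T$ distinct sinks are consumed as tails, (ii) no SCC is inadvertently converted into a \emph{new} sink — which happens exactly when a source is fed back from the only sink it can reach and is thereby trapped in a terminal component — and (iii) the root $M$ ends up reaching every other SCC. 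This is the combinatorial content of Eswaran--Tarjan-type strong-connectivity augmentation specialized to the partial budget of $\beta_T$ edges, and verifying these three invariants is where the real work lies.

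Finally, the symmetric case $\beta_B\le\beta_T$ I would obtain by duality: reversing every edge of $\cD(\bA)$, equivalently replacing $\bA$ by $\bA^{\tr}$ and $\bK$ by $\bK^{\tr}$, interchanges non-top and non-bottom linked SCCs and carries the construction above to one yielding a unique non-bottom linked SCC and $\beta_T-\beta_B$ non-top linked SCCs using $\beta_B$ edges, which is the claimed conclusion.
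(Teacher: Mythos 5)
Your reduction to the condensation, the reachability bipartite graph between sources (non-top linked SCCs) and sinks (non-bottom linked SCCs), and the sequential-pairing-into-a-cycle construction when that graph admits a matching saturating the source side all match the paper's argument. But you have correctly located the crux --- the case where Hall's condition fails on the source side --- and then you stop there: you sketch a ``staged linking'' and explicitly defer verifying its three invariants, invoking Eswaran--Tarjan-type augmentation. That is a genuine gap; the existence claim of the lemma is precisely nontrivial in this deficient case, and no argument is supplied.

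The missing idea, which makes the deficient case easy rather than a general augmentation problem, is to take a \emph{maximum} matching $M^*$ of the reachability bipartite graph $\cB(\cI,\cJ,\cE_{\cI,\cJ})$ and exploit its maximality. Close the matched sources and matched sinks into a single cycle via the sequential pairing, producing one large SCC $\cN$. Maximality of $M^*$ forces two structural facts: every unmatched source reaches only matched sinks (otherwise the matching could be extended), hence reaches $\cN$; and every unmatched sink is reached only from matched sources, hence is reached from $\cN$. Consequently, in $\cD(\bA,\bB,\bK',\bC)$ the residual reachability graph between the $\beta_T-|M^*|$ unmatched sources and the $\beta_B-|M^*|$ unmatched sinks is \emph{complete} bipartite, so a second sequential pairing there needs no further case analysis and consumes exactly $\beta_T-|M^*|$ additional edges. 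This settles all three of your invariants at once: the total edge count is $\beta_T$ with tails in $\beta_T$ distinct sinks (so exactly $\beta_B-\beta_T$ sinks survive); no new sink or source is created because the two cycles mutually reach each other and merge into a single non-top linked SCC; and only two stages are ever needed. Your duality argument for $\beta_B\le\beta_T$ is fine.
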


\begin{proof}
    The proof follows by construction: assume that $\beta_T\le\beta_B$ and let $\cI = \left\{ 1,\dots,\beta_T \right\}$ and $\cJ=\left\{ 1,\dots,\beta_B \right\}$. Further, define the bipartite graph $\cB = \cB(\cI,\cJ,\cE_{\cI,\cJ})$ having an edge $(i,j)\in\cE_{\cI,\cJ}$ if there is a path from a vertex in $\cN_i^T$ to some vertex in $\cN_j^B$ (in which case we say that $\cN_i^T$ \emph{reaches} $\cN_j^B$). Given a maximum matching, $M^*$, of $\cB$, one of two things can happen; either $M^*$ has left-unmatched vertices, or not.

    In case there are no left-unmatched vertices in $\cB$ associated with $M^*$, let $\cJ'$ be the subset of $\cJ$ comprising those indices vertices belonging to the edges in $M^\ast$, and consider the sequential-pairing $|\cJ',\cI\rangle_{M^*}$. Further, let $\iota$ be a function that given an SCC of $\cD(\bA)$ returns the index of a single vertex in that SCC and define the information pattern $\bK'$ as $\bK'_{\iota(\cN^T_j),\iota(\cN^B_i)} = 1$ if the edge $(j,i)$ belongs to the sequential-pairing $|\cJ',\cI\rangle_{M^*}$, and $\bK'_{l,t}=0$ for all other pairs.

    Now, note that the resulting closed-loop system digraph $\cD(\bA,\bB,\bK',\bC)$ has a unique non-top linked SCC, since the edges induced by $\bK'$ ensure that there is a cycle going through all of the non-top linked SCCs of $\cD(\bA,\bB,\bC)$, and through $\beta_B-\beta_T$ of the non-bottom linked SCCs\@. Moreover, because no edges were added to the remaining $\beta_B-\beta_T$ non-bottom linked SCCs, these remain non-bottom linked in $\cD(\bA,\bB,\bK',\bC)$, and  the information pattern $\bK = \bK'$ satisfies the theorem statement.

     Finally, if there are left-unmatched vertices in $\cB$ associated with $M^*$, we must apply the procedure twice. To be more precise, we let $\cI'$ and $\cJ'$ be the subsets of $\cI$ and $\cJ$ respectively, that correspond to the indices of the vertices in the  edges in the maximum matching $M^*$. Then the edges induced by $\bK'$ in $\cD(\bA,\bB,\bK',\bC)$ give rise to a cycle going through the non-top and non-bottom linked SCCs indexed by $\cI'$ and $\cJ'$ respectively, and all these SCCs belong to the single SCC in $\cD(\bA,\bB,\bK',\bC)$.

     Now, since $\cB$ has left-unmatched vertices associated with $M^\ast$, it must be the case that there is some non-top linked SCC that only reaches non-bottom linked SCCs that were already matched by $M^\ast$. Note also that the non-top linked SCCs that were matched in $M^\ast$ reach all of the remaining non-bottom linked SCCs, otherwise there would be a non-bottom linked SCC that would be reachable from an unmatched non-top linked SCC making $M^\ast$ non-maximal. So, the non-top linked SCCs with indexes in $\cI\setminus\cI'$ reach all of the non-bottom linked SCCs in $\cD = \cD(\bA,\bB,\bK',\bC)$. Finally, note that the non-top linked and non-bottom linked SCCs of $\cD(\bA)$ indexed by $\cJ\setminus\cJ'$ and $\cI\setminus\cI'$ correspond to the non-top, and non-bottom linked SCCs of $\cD$, and that $\beta_B > \beta_T$ and that every non-top linked SCC of $\cD$ reaches every non-bottom linked SCC of $\cD$. This implies that a maximum matching $N^*$ of $\cB(\cI \setminus \cI',\cJ \setminus \cJ',\cE_{\cI \setminus \cI',\cJ \setminus \cJ'})$ has no left-unmatched vertices. So we perform a sequential-pairing $|\cJ\setminus\cJ',\cI\setminus\cI'\rangle_{N^*}$ and define $\bK''_{\iota(\cN^T_j),\iota(\cN^B_i)} = 1$ if the edge $(j,i)$ is in the pairing $|\cJ\setminus \cJ',\cI\setminus \cI'\rangle_{N^*}$ and $\bK''_{l,t}=0$ for any other pair of indices.

    By considering $\bK = \bK'+\bK''$, we obtain $\cD(\bA,\bB,\bK,\bC)$ satisfying the conclusions of the theorem.
\end{proof}

Subsequently, we have the following result regarding the information patterns $\bK$ satisfying Theorem~\ref{noStrucFixedModes}--$(a)$.

\begin{theorem}\label{correachpair}
    Consider $(\bA,\bB,\bC)$ a structural system with $\bB=\bC=\mathbb{I}_n$, and $\cD(\bA)$ be the state digraph.  Further, let the non-top linked SCCs of $\cD(\bA)$ be denoted by $\cN^T_1,\dots,\cN_{\beta_T}^T$, and the non-bottom linked SCCs by $\cN^B_1,\dots,\cN^B_{\beta_B}$. Then, provided $\beta_T\le\beta_B$ (respectively  $\beta_B\le\beta_T$) there exists a information pattern $\bK$ with $\beta_B$ (respectively  $\beta_T$) nonzero entries, so that $\cD(\bA,\bB,\bK,\bC)$ satisfies Theorem~\ref{noStrucFixedModes}--$(a)$. Further, this information pattern has the lowest number of non-zero entries in order to satisfy Theorem~\ref{noStrucFixedModes}--$(a)$.
    \hfill $\diamond$
\end{theorem}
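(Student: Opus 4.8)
The plan is to establish the claim in two halves: an explicit construction achieving $\max(\beta_T,\beta_B)$ nonzero entries, and a matching lower bound showing that no information pattern with fewer entries can satisfy Theorem~\ref{noStrucFixedModes}--$(a)$. I would assume without loss of generality that $\beta_T\le\beta_B$, so that $\max(\beta_T,\beta_B)=\beta_B$; the reverse case is symmetric upon reversing all edge orientations, which swaps the roles of non-top and non-bottom linked SCCs (recall Definition~\ref{linkedSCC}).

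For the construction, I would first invoke Lemma~\ref{reachpair} to obtain an information pattern $\bK'$ with $\beta_T$ nonzero entries such that $\cD(\bA,\bB,\bK',\bC)$ has a unique non-top linked SCC, call it $\cN$, together with exactly $\beta_B-\beta_T$ non-bottom linked SCCs $\cM_1,\dots,\cM_{\beta_B-\beta_T}$. The crucial observation is that, since the condensation (DAG representation) of $\cD(\bA,\bB,\bK',\bC)$ then has a single source $\cN$, every SCC is reachable from $\cN$, and every SCC in turn reaches one of the sinks $\cM_i$. I would then augment $\bK'$ by adding, for each $i$, a single feedback edge from $\cM_i$ back into $\cN$ --- concretely, setting one extra entry $\bK''_{b,a}=1$ with $x_a\in\cM_i$ and $x_b\in\cN$ --- which contributes $\beta_B-\beta_T$ further nonzero entries, for a total of $\beta_B$. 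Setting $\bK=\bK'+\bK''$, every state vertex lies on a path $\cN\rightsquigarrow\cM_i$ in the condensation, and the added edge $\cM_i\to\cN$ closes this path into a cycle. Since $\cN$ already contains a feedback edge from the Lemma~\ref{reachpair} construction, and every $\cM_i\to\cN$ edge is itself a feedback edge, every state vertex is contained in an SCC of $\cD(\bA,\bB,\bK,\bC)$ containing an edge of $\cE_{\cY,\cU}$, which is precisely Theorem~\ref{noStrucFixedModes}--$(a)$.

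For optimality, I would argue that any $\bK$ satisfying Theorem~\ref{noStrucFixedModes}--$(a)$ must have at least $\beta_B$ nonzero entries by a counting argument on the linked SCCs. Each non-top linked SCC $\cN_k^T$ has, by Definition~\ref{linkedSCC}, no incoming edges from other SCCs in $\cD(\bA)$; hence the only way a vertex of $\cN_k^T$ can lie on a cycle --- as condition $(a)$ demands --- is for that cycle to re-enter $\cN_k^T$ through a feedback-induced edge, forcing at least one nonzero entry of $\bK$ whose target input corresponds to a state of $\cN_k^T$. As distinct non-top linked SCCs are vertex-disjoint, these entries are distinct, giving at least $\beta_T$ nonzero entries; the dual argument applied to the non-bottom linked SCCs (which have no outgoing edges, hence require an outgoing feedback edge to lie on any cycle) gives at least $\beta_B$ nonzero entries. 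Consequently at least $\max(\beta_T,\beta_B)=\beta_B$ entries are required, matching the construction and proving both existence and minimality.

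The step I expect to be most delicate is verifying in the construction that the added back-edges cover every \emph{intermediate} SCC, and not merely the sources and sinks: a naive closing of each sink to an arbitrary source can leave interior SCCs off every cycle (as is easy to exhibit on small examples), so the argument hinges essentially on the uniqueness of the source $\cN$ guaranteed by Lemma~\ref{reachpair}, which is exactly what forces every SCC onto a common $\cN\rightsquigarrow\cM_i$ path. A secondary point requiring care is the reduction of the case $\beta_B\le\beta_T$ to the case treated above, which I would handle via the edge-reversal symmetry rather than repeating the construction.
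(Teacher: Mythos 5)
Your construction is exactly the paper's: apply Lemma~\ref{reachpair} to merge the non-top linked SCCs into a single source $\cN$ using $\beta_T$ feedback entries, then close each of the $\beta_B-\beta_T$ remaining non-bottom linked SCCs back into $\cN$ with one additional entry each, and establish minimality by observing that each non-bottom (resp.\ non-top) linked SCC of $\cD(\bA)$ forces its own outgoing (resp.\ incoming) feedback entry. Your write-up is if anything more careful than the paper's on the two delicate points --- that uniqueness of the source $\cN$ is what guarantees the intermediate SCCs are swept onto cycles, and the disjointness counting behind the lower bound of $\max(\beta_T,\beta_B)$ --- but the route is the same.
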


\begin{proof}
    To construct the appropriate information pattern, assume that $\beta_T\le\beta_B$, then let $\bK'$ be the structural pattern provided by Lemma~\ref{reachpair}; this information pattern is such that $\cD(\bA,\bB,\bK',\bC)$ has a unique non-top linked SCC $\cN$, and non-bottom linked SCCs $\cN^B_1,\dots,\cN^B_{\beta_B-\beta_T}$. So, let $\bK''$ be such that $\bK''_{i_k,\iota(\cN^B_k)} = 1$, where $i_k$ is the index of an arbitrary state variable in $\cN$, and $\iota(\cN^B_k)$ provides the index of a state variable in the non-bottom linked SCC\@. So, let $\bK = \bK' + \bK''$; the resulting digraph $\cD(\bA,\bB,\bK,\bC)$, is strongly connected, and contains at least one feedback edge, thus satisfying Theorem~\ref{noStrucFixedModes}--$(a)$; furthermore, it comprises exactly $\max(\beta_B,\beta_T)$ feedback edges, thus making $\bK$ a sparsest information pattern satisfying Theorem~\ref{noStrucFixedModes}--$(a)$, since if less feedback edges were used, associated with the information pattern $\tilde{\bar{K}}$, then there would be a non-bottom linked SCC in $\cD(\bA,\bB,\bK,\bC)$ without feedback edges.
\end{proof}

\begin{remark}
   First, all the tools required to obtain Theorem~3, Corollary~1 and Theorem~4, can be implemented by resorting to algorithms with polynomial complexity (in the dimensions of the state, input and output), see Section~\ref{prelim}. Secondly, in all the results we gave so far, where condition $(a)$ in Theorem~\ref{noStrucFixedModes} is intended, we aimed to obtain a closed-loop digraph comprising a single SCC by producing information patterns with feedback links from outputs to inputs located in non-bottom and non-top linked SCCs, respectively. We chose this method, because the class of information patterns that satisfy condition $(a)$ of Theorem~\ref{noStrucFixedModes} is rather unruly, as illustrated  in Figure~\ref{fig:unruly-feedbacks}. However, for the class of state digraphs comprising the same number of non-top and non-bottom linked SCCs, all sparsest information patterns that satisfy the aforementioned condition, must follow the strategy previously employed.
    \hfill$\diamond$%
\label{rmk:nont-nonb-feedbacks}
\end{remark}
\begin{figure}[htpb]
      \centering
  \includegraphics[scale=0.3]{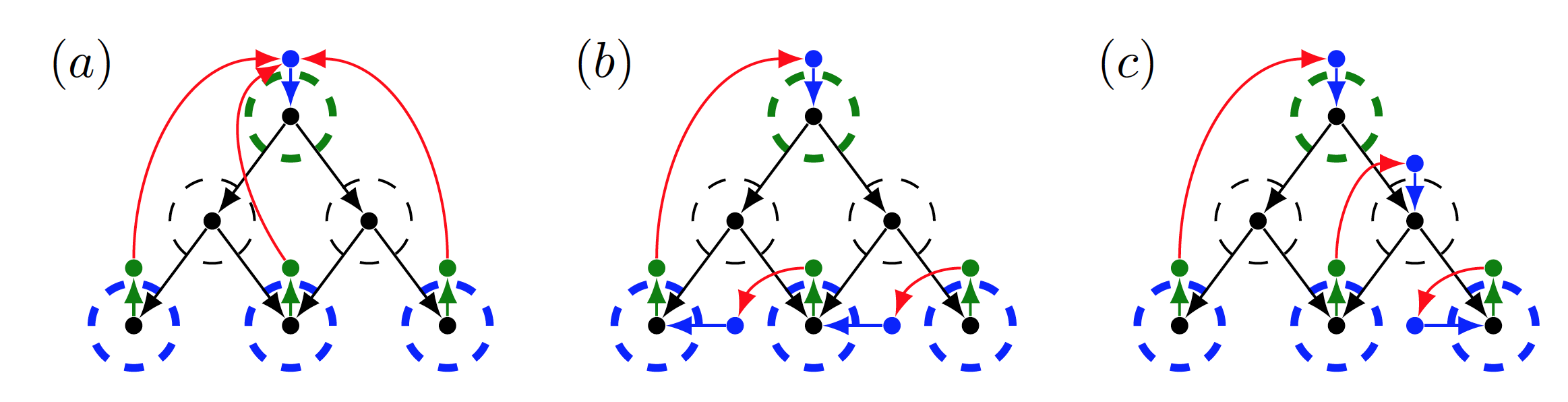}
    \caption{ In this figure, we present several information patterns that guarantee the satisfiability of Theorem~\ref{noStrucFixedModes}--$(a)$, where we represent the SCCs in dashed circles, the non-top linked SCCs are depicted in green and the non-bottom linked SCCs are depicted in blue, further the feedback edges are depicted in red. In $(a)$ we present the structural pattern obtained from Corollary~\ref{correachpair}, in $(b)$ we present an alternative information pattern that also produces a single SCC, and in $(c)$ we present a information pattern that comprises three SCCs while still satisfying Theorem~\ref{noStrucFixedModes}$-(a)$.}%
\label{fig:unruly-feedbacks}
\end{figure}

The main reason for pursuiting strategies as emphasized in Remark~1,  when we design $\bK$ to satisfy condition $(a)$, is closely related to the next theorem (Theorem~5) that is based in the next lemma.

\begin{lemma}\label{thm:still-np-comp}
 The problem of determining a sparsest information pattern $\bK^*$ such that $(\bA,\bB,\bK^*,\bC)$, with $\bB=\bC=\mathbb{I}_n$, satisfies condition $(a)$ of Theorem~\ref{noStrucFixedModes} and $\cD(\bA,\bB=\mathbb{I}_n,\bK^*,\bC=\mathbb{I}_n)$ has two SCCs comprising state variables is NP-hard.
\hfill $\diamond$
\end{lemma}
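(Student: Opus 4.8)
The plan is to prove NP-hardness by a polynomial-time reduction from the \emph{decomposition problem} (Theorem~\ref{decompNP}) and then invoke Lemma~\ref{NPcompRed}. Given an instance $\cD=(\cV,\cE)$ of the decomposition problem, where $\cD$ is a DAG on $n=|\cV|$ vertices, I would build the structural system whose state digraph is $\cD(\bA)=\cD$ and set $\bB=\bC=\mathbb{I}_n$. Since $\cD$ is acyclic, every state vertex is its own SCC, the non-top linked SCCs are exactly the sources of $\cD$, and the non-bottom linked SCCs are exactly its sinks. The decision version of our problem --- does there exist $\bK$ satisfying condition $(a)$ of Theorem~\ref{noStrucFixedModes}, with exactly two SCCs comprising state vertices and at most $t$ feedback edges? --- is clearly in NP, since, given $\bK$, one verifies the SCC structure, condition $(a)$, and the edge count of $\cD(\bA,\mathbb{I}_n,\bK,\mathbb{I}_n)$ in polynomial time.

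The heart of the reduction is a dictionary between valid decompositions $(\Gamma_1,\Gamma_2)$ of $\cD$ and feedback patterns producing exactly two state-SCCs. Because $\bB=\bC=\mathbb{I}_n$, a nonzero entry $\bK_{a,b}$ realizes a state-level edge $x_b\to x_a$ through the loop $x_b\to y_b\to u_a\to x_a$; making a vertex subset into a single SCC that contains a feedback edge thus amounts to augmenting the induced sub-DAG to strong connectivity using such edges, and by the counting underlying Lemma~\ref{reachpair} and Theorem~\ref{correachpair} the cheapest way to do so uses $\max(s,t)$ feedback edges, where $s$ and $t$ are the numbers of sources and sinks of the induced sub-DAG. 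I would then establish both implications: a partition $\{\Gamma_1,\Gamma_2\}$ yields exactly two state-SCCs precisely when there are no edges from $\Gamma_2$ to $\Gamma_1$ (otherwise the two closed-up blocks merge into one), which is condition $(i)$; and the induced sub-DAGs carry no \emph{boundary} sources or sinks --- vertices whose predecessors (resp.\ successors) all lie in the other block --- precisely when every vertex of $\Gamma_i$ lies on a source-to-sink path internal to $\Gamma_i$, which is condition $(ii)$. Under $(i)$ and $(ii)$ the sources and sinks of the sub-DAG on $\Gamma_i$ coincide with the global sources and sinks falling in $\Gamma_i$, so the feedback cost is determined entirely by how the sources and sinks of $\cD$ are split across the cut.

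With this correspondence in hand, I would complete the reduction by choosing the threshold $t$ so that a two-SCC pattern of cost at most $t$ exists if and only if $\cD$ admits a valid decomposition. The natural target is the boundary-free optimum, and to make it robust I would, if needed, augment $\cD$ with simple source/sink gadgets so that $\beta_T$ and $\beta_B$ are controlled and any feasible split is forced to be balanced. The point is that a valid decomposition attains this boundary-free cost, whereas any partition violating $(i)$ fails to give exactly two state-SCCs, and any partition violating $(ii)$ introduces strictly more boundary sources/sinks and hence strictly more feedback edges. Solving the sparsest two-SCC problem therefore decides decomposability, and Lemma~\ref{NPcompRed} yields the claimed NP-hardness.

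The step I expect to be the main obstacle is the optimality (``no-cheating'') direction: showing that no partition with boundary sources or sinks, i.e.\ one violating condition $(ii)$, can undercut the feedback cost achieved by a genuine decomposition, so that the sparsest admissible $\bK$ must arise from a partition satisfying both $(i)$ and $(ii)$. This is where the precise accounting of sources and sinks across the cut --- and, should the raw construction fail to pin down a clean threshold, the design of the gadgets controlling $\beta_T$, $\beta_B$ and the balance of any feasible split --- has to be carried out carefully; by contrast, the separation argument for condition $(i)$ and the coverage argument for condition $(a)$ are comparatively routine once the state-level edge interpretation of $\bK$ is fixed.
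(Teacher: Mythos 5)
Your reduction is essentially the paper's: both take the DAG of the decomposition problem as the state digraph with $\bar B=\bar C=\mathbb{I}_n$, identify its sources and sinks with the non-top and non-bottom linked SCCs, and read a valid partition $(\Gamma_1,\Gamma_2)$ off the two state-containing SCCs of the closed-loop digraph, with condition $(i)$ enforced because cross-edges in both directions would merge the two SCCs. The one point where you diverge --- introducing a sparsity threshold and possible gadgets to rule out ``cheating'' partitions --- is dispatched in the paper by observing (via the construction of Theorem~\ref{correachpair}) that any sparsest pattern meeting condition $(a)$ places feedback edges only from non-bottom linked to non-top linked SCCs, which directly forces condition $(ii)$ of the decomposition problem on the resulting partition.
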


\begin{proof}
The proof follows by resorting to Lemma~\ref{NPcompRed}, where $\mathcal P_A$ is the (NP-hard) decomposition problem and $\mathcal P_B$ the problem of determining $\bar K$ such that condition $(a)$ of Theorem~\ref{noStrucFixedModes} is satisfied and $\cD(\bA,\bB=\mathbb{I}_n,\bK^*,\bC=\mathbb{I}_n)$ has two SCCs comprising state variables. Towards this goal let $\mathcal D=(\mathcal V=\{v_1,\ldots,v_n\},\mathcal E)$ in the decomposition problem to be the state digraph $\mathcal D(\bar A)=(\mathcal X,\mathcal E_{\mathcal X,\mathcal X})$ with $\mathcal X=\{1,\ldots,n\}$ and $\mathcal E_{\mathcal X,\mathcal X}=\{(x_i,x_j): \ (v_i,v_j)\in \mathcal E\}$, and $\bar B=\bar C=\mathbb{I}_{n}$; in addition, notice that the sources and sinks of $\mathcal D$ are the non-top and the non-bottom linked SCCs of $\mathcal D(\bar A)$, respectively. Further, such reduction has linear complexity; hence, $\mathcal P_A$ can be polynomially reduced to $\mathcal P_B$. Now, we show that this reduction is correct, i.e., a solution to $\mathcal P_B$ provides a solution to $\mathcal P_A$. Let $\mathcal N_1$ and $\mathcal N_2$ be the two SCCs containing state variables of $\cD(\bA,\bB=\mathbb{I}_n,\bK^*,\bC=\mathbb{I}_n)$  with a sparsest $\bK^*$.  As consequence of the constructions provided in Theorem~\ref{correachpair}, it follows that $\mathcal N_1$ and $\mathcal N_2$ only contain input vertices with outgoing edges into the state variables that are non-top linked SCCs of $\mathcal D(\bar A)$. Similarly, the $\mathcal N_1$ and $\mathcal N_2$ only contain output vertices with incoming edges from the state variables that are non-bottom linked SCCs of $\mathcal D(\bar A)$.
Therefore,  $\Gamma_1=\{v_i \in \mathcal V: \ x_i \in \mathcal X\cap \mathcal N_1 \}$ and $\Gamma_2=\{v_i \in \mathcal V: \ x_i \in \mathcal X\cap \mathcal N_1 \}$ are partitions of $\mathcal D$, and they satisfy the decomposition problem because $\mathcal D$ is DAG, which implies that there exists only directed edges from one of the partitions to the other. Finally, $\mathcal P_B$ is a NP problem, since the satisfaction of Theorem~\ref{noStrucFixedModes}-$(a)$ can be verified polynomially, by determining the different SCCs of $\cD(\bA,\bB=\mathbb{I}_n,\bK^*,\bC=\mathbb{I}_n)$ (see for instance~\cite{Cormen}) and verifying if only two SCCs contain state variables. Therefore, all conditions of Lemma~\ref{NPcompRed} are satisfied, which implies that $\mathcal P_B$ is NP-hard, and the result follows.
\end{proof}

In fact, noticing that the problem in Lemma~\ref{thm:still-np-comp} is an instance of a more general problem with arbitrary  input and  output matrices, and  the fact that the sparsest information patterns are essential information patterns, we obtain the following result.

\begin{theorem}\label{thm:general-np-comp}
 The problem of determining an  essential information pattern $\bK^*$ such that $(\bA,\bB,\bK^*,\bC)$ satisfies the  conditions in Theorem~\ref{noStrucFixedModes} is NP-hard.
\hfill $\diamond$
\end{theorem}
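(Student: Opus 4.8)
The plan is to apply the reduction principle of Lemma~\ref{NPcompRed}, taking $\cP_A$ to be the NP-hard problem of Lemma~\ref{thm:still-np-comp} and $\cP_B$ to be the problem of determining a sparsest essential information pattern of a general structural system $(\bA,\bB,\bC)$. I interpret $\cP_B$ as the (decision version of the) problem of finding a minimum-cardinality pattern $\bK^*$ for which $(\bA,\bB,\bK^*,\bC)$ satisfies Theorem~\ref{noStrucFixedModes}, since, by the observation preceding the statement, a sparsest feasible pattern is automatically essential; thus an oracle for $\cP_B$ also returns a sparsest feasible pattern. It therefore suffices to show that $\cP_A$ embeds into $\cP_B$ as the special case $\bB=\bC=\mathbb{I}_n$.

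First I would set up the reduction. Given an instance of $\cP_A$, i.e.\ a structural system $(\bA,\mathbb{I}_n,\mathbb{I}_n)$ whose state digraph $\cD(\bA)$ is the directed acyclic graph of a decomposition problem, I pass the same triple to $\cP_B$; this map is computable in linear time and is hence a polynomial-time reduction, so all the content lies in its correctness. The key structural facts I would exploit are that, since $\cD(\bA)$ is a DAG, its non-top (resp.\ non-bottom) linked SCCs coincide with the sources (resp.\ sinks), and that every cycle of $\cD(\bA,\mathbb{I}_n,\bK,\mathbb{I}_n)$ must traverse a feedback edge induced by $\bK$. Consequently condition~$(b)$ of Theorem~\ref{noStrucFixedModes} cannot be met without feedback: by Theorem~\ref{informationCondb} and Corollary~\ref{cor:informationCondb} the feedback edges of any feasible $\bK$ must close a minimum path cover of $\cD(\bA)$ (recall Lemma~\ref{matMatPathCycle}) into disjoint cycles, while condition~$(a)$ simultaneously forces every state into a nontrivial SCC containing a feedback edge.

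The correctness step then amounts to proving that a sparsest $\bK$ meeting both $(a)$ and $(b)$ induces precisely the configuration sought by Lemma~\ref{thm:still-np-comp}: condition~$(a)$ holds and the state vertices split into two SCCs $\cN_1,\cN_2$. The partition $\Gamma_1=\{v_i:\ x_i\in\cN_1\}$, $\Gamma_2=\{v_i:\ x_i\in\cN_2\}$ then solves the decomposition problem exactly as in the proof of Lemma~\ref{thm:still-np-comp}, where the acyclicity of $\cD(\bA)$ guarantees that edges run only from one part to the other. Since a sparsest feasible pattern is essential, a solution of $\cP_B$ yields a solution of $\cP_A$, completing the reduction.

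To place $\cP_B$ in NP I would verify that, given a candidate $\bK$ with at most $k$ nonzero entries, feasibility is checked by testing conditions $(a)$ and $(b)$ of Theorem~\ref{noStrucFixedModes}, both decidable in time polynomial in $n,p,m$, and essentiality is checked by deleting each of the at most $pm$ edges of $\bK$ and re-testing feasibility, again in polynomial time. With $\cP_A$ NP-hard, $\cP_B$ in NP, and $\cP_A$ polynomially reducible to $\cP_B$, Lemma~\ref{NPcompRed} yields the claim. The main obstacle I expect is the correctness step: I must show that adjoining condition~$(b)$ does not alter the combinatorial core, i.e.\ that a sparsest pattern satisfying both conditions still realizes the two-part decomposition of Lemma~\ref{thm:still-np-comp} rather than collapsing all states into the single SCC produced by the pure condition-$(a)$ construction of Theorem~\ref{correachpair}. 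Establishing that condition~$(b)$ is compatible with, and does not cheapen, this two-part structure --- possibly by exploiting the freedom in $\bB,\bC$ hinted at by the phrase ``arbitrary input and output matrices'' --- is where the real work resides.
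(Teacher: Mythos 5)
Your overall route is the same as the paper's: both arguments rest on Lemma~\ref{thm:still-np-comp} together with the observation that sparsest feasible patterns are essential. The paper, however, gives essentially no further detail --- it simply asserts that the problem of Lemma~\ref{thm:still-np-comp} is ``an instance of a more general problem with arbitrary input and output matrices'' --- whereas you try to make the reduction explicit, and in doing so you expose (and honestly flag) a step that does not go through as written.

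The gap is the correctness of your reduction. The NP-hardness in Lemma~\ref{thm:still-np-comp} comes entirely from the side constraint that $\cD(\bA,\mathbb{I}_n,\bK^*,\mathbb{I}_n)$ have \emph{two} SCCs containing state variables; that constraint is what lets the two SCCs be read off as the partition $\Gamma_1,\Gamma_2$ of the decomposition problem. If you simply hand the triple $(\bA,\mathbb{I}_n,\mathbb{I}_n)$ to an oracle for ``sparsest (hence essential) feasible $\bK$'' with no such constraint, the oracle is free to return --- and by Theorem~\ref{correachpair} will generically return --- a pattern that merges all states into a single SCC, since a single SCC costs $\max(\beta_B,\beta_T)$ feedback edges, which is never more than the cost of a two-SCC configuration, and the cost of meeting condition $(b)$ is governed by the maximum-matching deficiency of $\cB(\bA)$ independently of the SCC structure. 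Such an answer carries no information about the two-part decomposition, so $\cP_A$ has not been solved. Closing this requires either re-encoding the two-SCC requirement inside the instance (e.g., via non-identity $\bB,\bC$, which is what the paper's phrase ``arbitrary input and output matrices'' appears to gesture at) or arguing that the constrained problem of Lemma~\ref{thm:still-np-comp} is itself already a member of the family of problems the theorem quantifies over; your proposal does neither, and you correctly identify this as ``where the real work resides.''
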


Motivated by Theorem~\ref{thm:general-np-comp}, and to partially address $\mathcal P_1$ in an efficient manner,  we have the following lemmas.

\begin{lemma}
    Let $\cD(\bA,\mathbb{I}_n,\bK,\mathbb{I}_n)$ satisfy Theorem~\ref{noStrucFixedModes}$-(a)$. Then, for some $i,j$ such that $\bK_{i,j} = 1$, let $\bK'$ be such that $\bK'_{i,j} = 0$, $\bK'_{i',j} = 1$ and $\bK'_{i,j'}=1$ for some $i',j'$ such that $x_{i'}$ and $x_{j'}$ belong to the same SCC, and $\bK'_{l,t} = \bK_{l,t}$ for every other pair of indices $l,t$. The resulting closed-loop system digraph $\cD(\bA,\mathbb{I}_n,\bK',\mathbb{I}_n)$ satisfies Theorem~\ref{noStrucFixedModes}$-(a)$.
    \hfill $\diamond$%
\label{bisection}
\end{lemma}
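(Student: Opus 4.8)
The plan is to verify condition $(a)$ of Theorem~\ref{noStrucFixedModes} for $\cD(\bA,\mathbb{I}_n,\bK',\mathbb{I}_n)$ directly, using the reading that $(a)$ holds exactly when every state vertex lies on a cycle of the closed-loop digraph that contains an edge of $\cE_{\cY,\cU}$. The key simplification is that, because $\bB=\bC=\mathbb{I}_n$, the only edges in which $\cD(\bA,\mathbb{I}_n,\bK,\mathbb{I}_n)$ and $\cD(\bA,\mathbb{I}_n,\bK',\mathbb{I}_n)$ differ are the feedback edges incident to $u_i,u_{i'},y_j,y_{j'}$: the edge $(y_j,u_i)$ is deleted and the edges $(y_j,u_{i'})$ and $(y_{j'},u_i)$ are added, while all state, input, and output edges and every other feedback edge are common to both digraphs.

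First I would dispose of the ``easy'' vertices. Given $x\in\cX$, the hypothesis supplies a cycle $\gamma$ of $\cD(\bA,\mathbb{I}_n,\bK,\mathbb{I}_n)$ through $x$ that carries a feedback edge. If $\gamma$ does not traverse the deleted edge $(y_j,u_i)$, then $\gamma$ is still a cycle of $\cD(\bA,\mathbb{I}_n,\bK',\mathbb{I}_n)$ and retains its feedback edge, so $x$ remains certified. Thus it only remains to treat those vertices all of whose certifying cycles use $(y_j,u_i)$. For such a vertex, since the unique out-edge of $u_i$ is $u_i\to x_i$ and the unique in-edge of $y_j$ is $x_j\to y_j$, the certifying cycle must have the form $x_i\to\cdots\to x_j\to y_j\to u_i\to x_i$.

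The construction I would then use is to reroute the segment $x_j\to y_j\to u_i\to x_i$ inside $\cD(\bA,\mathbb{I}_n,\bK',\mathbb{I}_n)$. The two new feedback edges furnish the sub-walks $x_j\to y_j\to u_{i'}\to x_{i'}$ and $x_{j'}\to y_{j'}\to u_i\to x_i$, and since $x_{i'}$ and $x_{j'}$ belong to the same SCC there is a directed path $x_{i'}\rightsquigarrow x_{j'}$. Splicing these together produces a walk $x_j\to y_j\to u_{i'}\to x_{i'}\rightsquigarrow x_{j'}\to y_{j'}\to u_i\to x_i$ from $x_j$ to $x_i$ that traverses the feedback edges $(y_j,u_{i'})$ and $(y_{j'},u_i)$; substituting it for the old segment turns $\gamma$ into a closed walk through $x$ that still carries a feedback edge, which certifies $x$ in $\cD(\bA,\mathbb{I}_n,\bK',\mathbb{I}_n)$.

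The step I expect to be the main obstacle is the claim that the connecting path $x_{i'}\rightsquigarrow x_{j'}$ survives the modification: a priori this path realizes the SCC in $\cD(\bA,\mathbb{I}_n,\bK,\mathbb{I}_n)$ and could itself run through the very edge $(y_j,u_i)$ that has been deleted, in which case the reroute need not close. I would resolve this by taking the path between $x_{i'}$ and $x_{j'}$ within the state digraph $\cD(\bA)$, whose edges are untouched by the modification, so that it is automatically present in $\cD(\bA,\mathbb{I}_n,\bK',\mathbb{I}_n)$; this is the natural content of the same-SCC hypothesis. For the residual case in which $x_i,x_j,x_{i'},x_{j'}$ all lie in one SCC, a useful device is that any simple path emanating from $x_i$ cannot use $(y_j,u_i)$ (its head is $x_i$) and any simple path terminating at $x_j$ cannot use it (its tail is $x_j$), which keeps the anchoring connections $x_i\rightsquigarrow x_{j'}$ and $x_{i'}\rightsquigarrow x_j$ intact in the modified digraph. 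I regard pinning down exactly when this connection can be guaranteed to avoid the deleted edge as the delicate part of the argument, and I would make the precise reading of ``same SCC'' explicit before committing to the reroute.
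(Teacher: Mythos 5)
Your proof is correct and follows the same underlying idea as the paper's: the SCC that loses the feedback edge $(y_j,u_i)$ is re-certified because the two new feedback edges, together with a path from $x_{i'}$ to $x_{j'}$ inside their common SCC, splice back into a closed walk carrying a feedback link. The paper's own proof is essentially a two-sentence assertion of this fact, so your explicit rerouting $x_j\to y_j\to u_{i'}\to x_{i'}\rightsquigarrow x_{j'}\to y_{j'}\to u_i\to x_i$, together with the observation that simple paths leaving $x_i$ or entering $x_j$ cannot traverse the deleted edge, supplies details the paper omits. Your worry about the reading of ``same SCC'' is well founded and your resolution is the right one: if the hypothesis were interpreted as ``same SCC of the closed-loop digraph $\cD(\bA,\mathbb{I}_n,\bK,\mathbb{I}_n)$'' the lemma would actually fail (e.g.\ $x_1\to x_2\to x_3$ with the single feedback edge $(y_3,u_1)$ replaced by $(y_3,u_3)$ and $(y_1,u_1)$ leaves $x_2$ on no cycle), whereas the paper's illustrative example confirms that ``same SCC'' means an SCC of the state digraph $\cD(\bA)$, whose edges are untouched by the modification — exactly the reading under which your connecting path $x_{i'}\rightsquigarrow x_{j'}$ is guaranteed to survive.
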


\begin{proof}
    Since $\cD(\bA,\mathbb{I}_n,\bK,\mathbb{I}_n)$ satisfies Theorem~\ref{noStrucFixedModes}$-(a)$, we have that every SCC has a feedback link on it. By considering $\cE'_{\cY,\cU}$, because $x_{i'},x_{j'}$ belong to the same SCC, it follows that the SCC that comprised the edge $\{(y_i,u_j)\}$ has now two feedback edges, i.e., $\{(y_i,u_{j'})\}$ and $\{(y_{i'},u_j)\}$.
\end{proof}

In the following, we denote $(x_1,x_2,x_3,\dots,x_k)$ to mean the path $\{(x_1,x_2),\dots,(x_{k-1},x_k)\}$; and, if $x_1=x_k$ we obtain  a cycle $(x_1,x_2,x_3,\dots,x_k,x_1)$.

\begin{lemma}\label{splitMethod}
    Let $\bK$ be an information pattern such that $\cD(\bA,\mathbb{I}_n,\bK, \mathbb{I}_n)$ satisfies Theorem~\ref{noStrucFixedModes}$-(b)$. In addition, let $\cC$ represent the disjoint union of cycles prescribed by Theorem~\ref{noStrucFixedModes}$-(b)$. Furthermore, note that $\cC$ can be partitioned into two sets $\cC_f$, corresponding to those comprising feedback links, and $\cC_s$, corresponding to those comprising only state variables.    Given a cycle $(u_{i_1},x_{i_1},x_{i_2},\dots,x_{i_k},y_{i_k},u_{i_1}) \in \cC_f$, then for any $l=1,\dots,k$, the information pattern $\bK^l$ such that $\bK^l_{i_1,i_k} = 0$, $\bK^l_{i_1,i_l} = \bK^l_{i_{l+1},i_l} = 1$ and $\bK^l_{r,t} = \bK_{r,t}$ for all other values of $r,t$, satisfies the condition of Theorem~\ref{noStrucFixedModes}--$(b)$.
    \mbox{~}\hfill$\diamond$
\end{lemma}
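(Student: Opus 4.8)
The plan is to produce, explicitly, the disjoint cycle family required by Theorem~\ref{noStrucFixedModes}--$(b)$ for the edited pattern $\bK^l$ by locally surgically modifying the family already guaranteed for $\bK$. I would start from the disjoint union of cycles $\cC=\cC_s\cup\cC_f$ that witnesses condition $(b)$ for $\bK$, and single out the distinguished cycle $C^\ast=(u_{i_1},x_{i_1},\dots,x_{i_k},y_{i_k},u_{i_1})\in\cC_f$. The first observation to record is that, because $\bB=\bC=\mathbb{I}_n$, every cycle of $\cC$ meets an input $u_t$ only through the edge $u_t\to x_t$ and an output $y_t$ only through $x_t\to y_t$; hence a cycle contains $u_t$ (resp.\ $y_t$) \emph{iff} it contains $x_t$. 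Since the states $x_{i_1},\dots,x_{i_k}$ all lie in $C^\ast$ and the cycles of $\cC$ are vertex-disjoint, no cycle of $\cC\setminus\{C^\ast\}$ uses any of the inputs $u_{i_1},\dots,u_{i_k}$ or outputs $y_{i_1},\dots,y_{i_k}$. As $\bK^l$ and $\bK$ differ only in entries touching the outputs $y_{i_l},y_{i_k}$ and the inputs $u_{i_1},u_{i_{l+1}}$, every cycle of $\cC\setminus\{C^\ast\}$ survives verbatim in $\cD(\bA,\mathbb{I}_n,\bK^l,\mathbb{I}_n)$.

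Next I would replace $C^\ast$ by the two cycles
$$C_1=(u_{i_1},x_{i_1},\dots,x_{i_l},y_{i_l},u_{i_1}),\qquad C_2=(u_{i_{l+1}},x_{i_{l+1}},\dots,x_{i_k},y_{i_k},u_{i_{l+1}}),$$
and check that all their edges are present in $\cD(\bA,\mathbb{I}_n,\bK^l,\mathbb{I}_n)$: the state edges $(x_{i_j},x_{i_{j+1}})$ lie in $\cE_{\cX,\cX}$ and are untouched by any change to $\bK$; the edges $u_{i_1}\to x_{i_1}$, $x_{i_l}\to y_{i_l}$, $u_{i_{l+1}}\to x_{i_{l+1}}$ and $x_{i_k}\to y_{i_k}$ are present because $\bB=\bC=\mathbb{I}_n$; and the two feedback links inserted by $\bK^l$, namely $y_{i_l}\to u_{i_1}$ and $y_{i_k}\to u_{i_{l+1}}$, close the two arcs into cycles. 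The cycles $C_1$ and $C_2$ partition $\{x_{i_1},\dots,x_{i_k}\}$ at the index $l$ and use disjoint auxiliary vertices ($u_{i_1},y_{i_l}$ versus $u_{i_{l+1}},y_{i_k}$), so they are vertex-disjoint from each other, and by the paragraph above they are disjoint from every cycle of $\cC\setminus\{C^\ast\}$. Therefore $\big(\cC\setminus\{C^\ast\}\big)\cup\{C_1,C_2\}$ is again a disjoint union of cycles, its vertex set still contains all of $\cX$ (the states of $C^\ast$ are redistributed into $C_1\cup C_2$ and nothing else is disturbed), and hence $\cD(\bA,\mathbb{I}_n,\bK^l,\mathbb{I}_n)$ satisfies Theorem~\ref{noStrucFixedModes}--$(b)$.

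The step I expect to be the crux is the disjointness bookkeeping in the first paragraph: one must be sure that the auxiliary vertices $u_{i_{l+1}}$ and $y_{i_l}$ \emph{newly recruited} into cycles are not already consumed elsewhere in $\cC$. This is precisely where the hypothesis $\bB=\bC=\mathbb{I}_n$ is indispensable, since it forces each input/output to be attached to a single dedicated state, so availability of $u_{i_{l+1}},y_{i_l}$ reduces to the already-known fact that $x_{i_{l+1}},x_{i_l}\in C^\ast$ and the original cover is vertex-disjoint; once this is in hand the remainder is routine verification. Finally I would dispose of the degenerate boundary value $l=k$ separately, where $C_2$ is empty and the edit leaves $\bK$ unchanged, so there is nothing to prove.
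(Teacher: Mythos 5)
Your proposal is correct and follows essentially the same route as the paper's proof: delete the distinguished cycle $(u_{i_1},x_{i_1},\dots,x_{i_k},y_{i_k},u_{i_1})$ from the covering family and replace it by the two shorter cycles closed through $(y_{i_l},u_{i_1})$ and $(y_{i_k},u_{i_{l+1}})$, observing that together they cover the same state vertices and remain disjoint from the untouched cycles. You are in fact somewhat more careful than the paper, both in justifying (via $\bB=\bC=\mathbb{I}_n$) that the newly recruited vertices $u_{i_{l+1}}$ and $y_{i_l}$ are not already consumed elsewhere in $\cC$, and in implicitly using the entry $\bK^l_{i_{l+1},i_k}=1$ (i.e., the edge $y_{i_k}\to u_{i_{l+1}}$), which is what the paper's own proof and Figure~4 actually require; the index $\bK^l_{i_{l+1},i_l}$ appearing in the lemma statement is evidently a typo.
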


\begin{proof}
    Note that by setting $\bK^l_{i_1,i_k}=0$, we remove the cycle $\cC_1=(u_{i_1},x_{i_1},\dots,x_{i_k},y_{i_k},u_{i_1})$ from $\cC_f$. However, by setting $\bK^l_{i_1,i_l}=\bK^l_{i_{l+1},i_l}=1$ we add $\cC_2=(u_{i_1},x_{i_1},\dots,x_{i_l},y_{i_l},u_{i_1})$ and $\cC_3=(u_{i_{l+1}},x_{i_{l+1}},\dots,x_{i_k},y_{i_k},u_{i_{l+1}})$ to $\cC_f$. Further, note that, jointly, these two cycles cover all of the state vertices in $(u_{i_1},x_{i_1},\dots,x_{i_k},y_{i_k},u_{i_1})$, and thus $\cC_s\cup(\cC_f\setminus\{\cC_1\})\cup\{\cC_2,\cC_3\}$ is a set of disjoint cycles covering $\cD(\bA,\mathbb{I}_n,\bK^l,\mathbb{I}_n)$, and so $\bK^l$ satisfies Theorem~\ref{noStrucFixedModes}--$(b)$.
\end{proof}

\begin{remark}
    Note that Lemma~\ref{bisection} and Lemma~\ref{splitMethod} describe strategies  by which one can design feasible information  patterns given a feasible  information patterns. In particular, given an essential information pattern, we can obtain another essential feasible information pattern. Furthermore, both strategies presented can be efficiently computed, i.e., resorting to polynomially complexity (in the dimensions of the state, input and output) algorithms.
    \mbox{~}\hfill$\diamond$
\end{remark}

\vspace{-0.2cm}

\section{Illustrative examples}\label{illustrativeexample}

\vspace{-0.1cm}

In Figure~\ref{bisection3}, for a given digraph $\cD(\bA)$, we illustrate the use of Lemma~\ref{bisection} that can be interpreted as follows. Given an information pattern $\bK$ satisfying Theorem~\ref{noStrucFixedModes}--$(a)$, with   $\bK_{1,4}$ and two states $x_{3}$ and $x_{2}$  in the same SCC of $\cD(\bA)$, we  can remove the feedback edge $(y_4,u_1)$ and consider instead two feedback edges $(y_4,u_3)$ and $(y_2,u_1)$. The closed-loop digraph can be seen to have the same number of SCCs comprising state vertices, and  still satisfies Theorem~\ref{noStrucFixedModes}--$(a)$. Thus, provided that the starting feedback pattern was essential, this method allows us to determine other essential information  patterns that are not the sparsest.

\begin{figure}[htpb]
 \centering
  \includegraphics[scale=0.3]{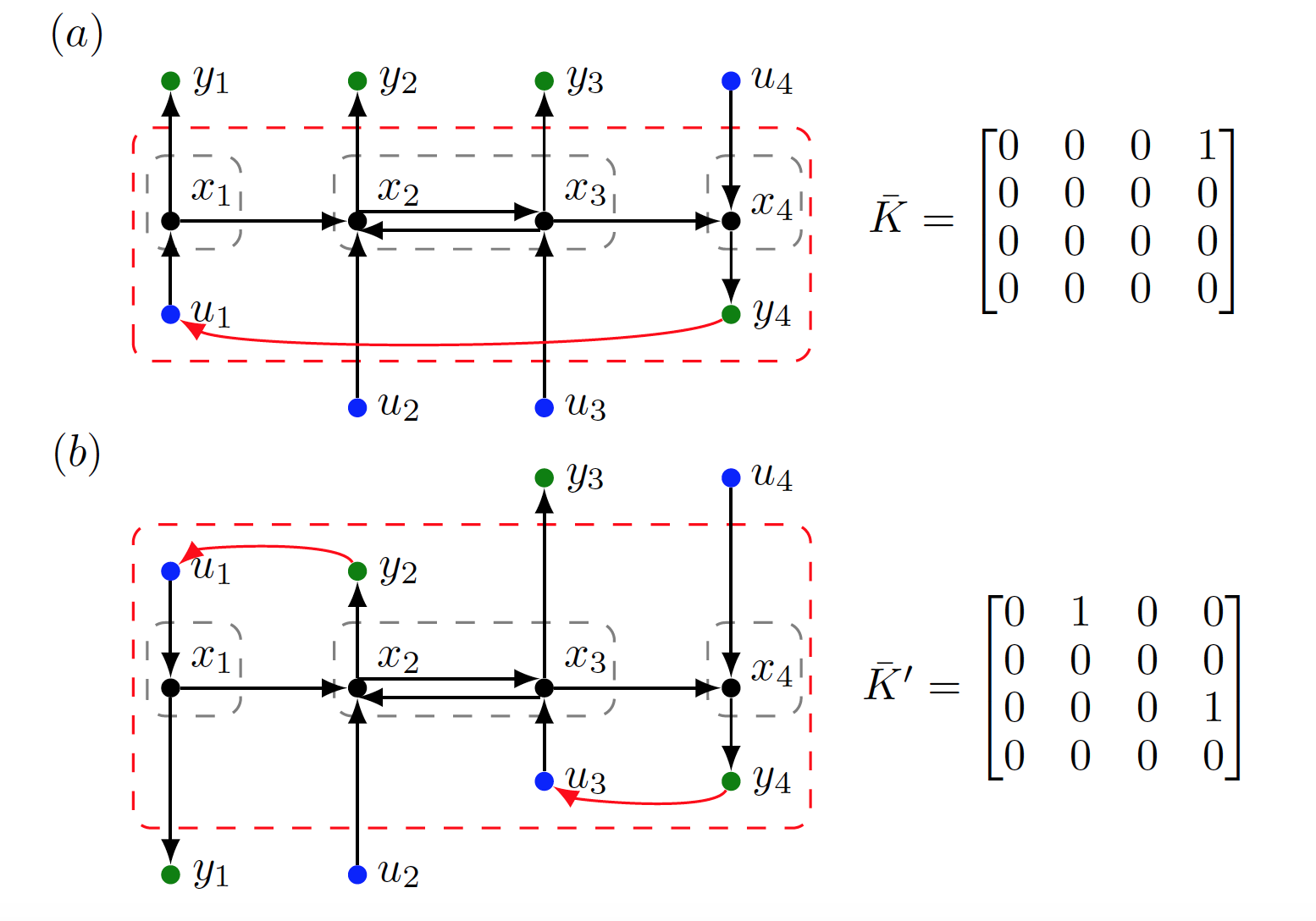}
    \caption[]{The SCCs of the state digraph $\cD(\bA)$ are depicted within grey dashed boxes, whereas the SCCs of the closed-loop digraph containing state vertices are depicted within the red dashed boxes. The state, input and output vertices are depicted in black, blue and green, respectively. In addition, the feedback links from the outputs to the inputs are depicted in red, and are related with the non-zero entries of the information patterns presented in the right-hand side. In $(a)$ we present the closed-loop digraph satisfying the assumptions in Lemma~\ref{bisection}, and in $(b)$ we present one possible conclusion from~it.}%
\label{bisection3}%
\end{figure}

\begin{figure}[htpb!]
 \centering
  \includegraphics[scale=0.3]{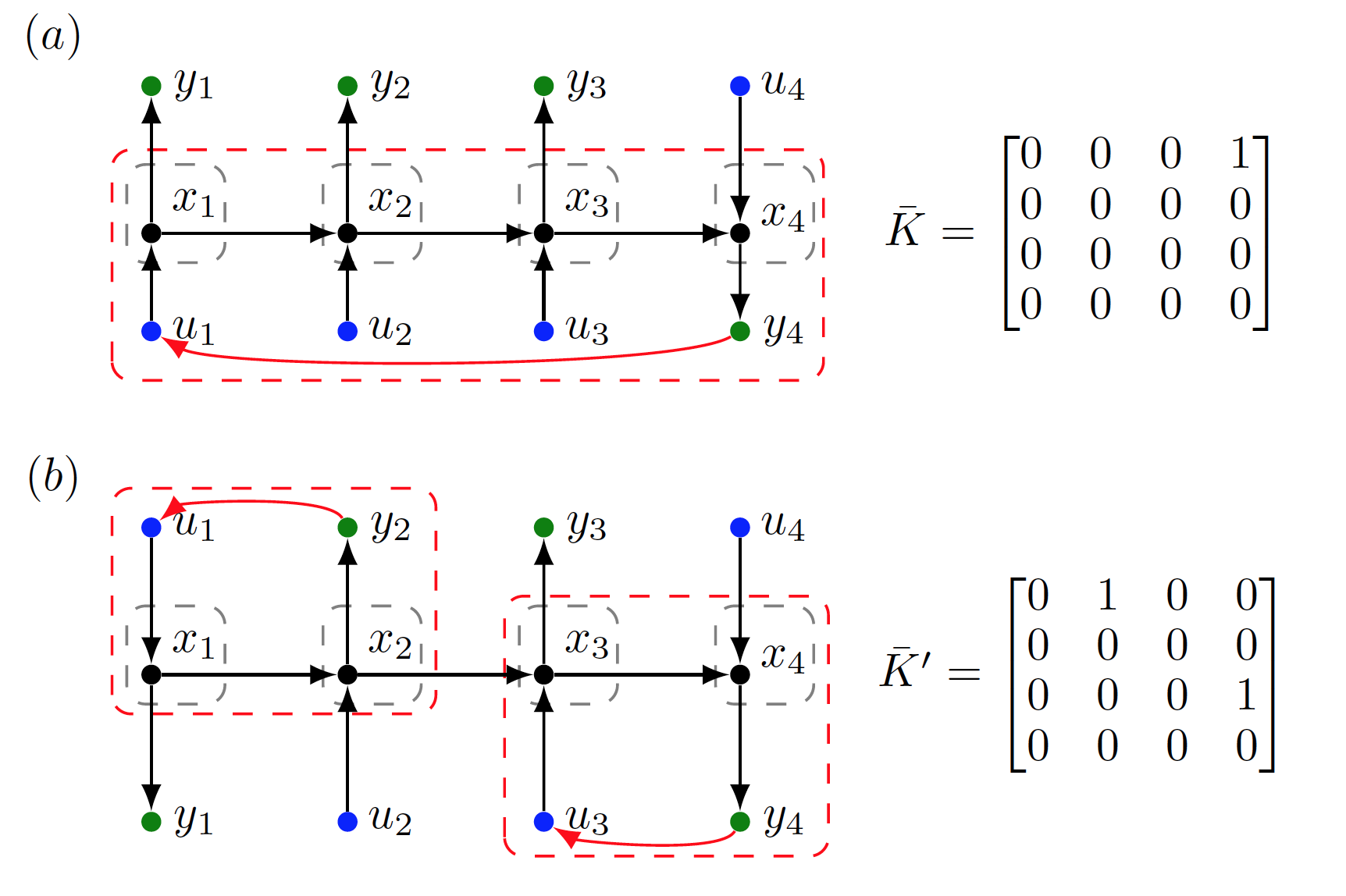}
    \caption[]{Adopting the same graphical representation explained in the caption of Figure~3, in $(a)$ we present the closed-loop digraph satisfying the assumptions in Lemma~\ref{splitMethod}, and in $(b)$ one possible conclusion from it.}%
\label{bisection4}%
\end{figure}

Similarly, in Figure~\ref{bisection4} we illustrate the use of Lemma~\ref{splitMethod} to be interpreted as follows. Once again, given $\bK$ satisfying Theorem~\ref{noStrucFixedModes}--$(b)$, with  $\bK_{1,4}=1$ and given two consecutive vertices $x_{2}$ and $ x_{3}$ in the cycle $\mathcal C$ comprising $(y_4,u_1)$ we can split $\mathcal C$ into two cycles $\mathcal C_1$ and  $\mathcal C_2$, where $\mathcal C_1$ comprises the edge $(y_{2},u_1)$ and $\mathcal C_2$ comprises $(y_4,u_{3})$. Once again, provided that the initial information pattern is essential and satisfies  Theorem~\ref{noStrucFixedModes}--$(b)$, (e.g.\ the sparsest, which can be obtained by Corollary~\ref{cor:informationCondb}) then a new essential information  pattern is formed satisfying  Theorem~\ref{noStrucFixedModes}--$(b)$.

\vspace{-0.1cm}

\section{Conclusions and further research}\label{conclusions}
\vspace{-0.1cm}

In this paper, we provided  several necessary and sufficient conditions to verify whether an information pattern is an essential pattern. In addition, we showed that the problem of determining essential feasible information patterns is NP-hard. Finally,  we provide a set of strategies that, given  an essential information pattern, enable us to determine a collection of essential information patterns.

The results provide new insights on schemes that can be used to approximate  minimum cost feasible information patterns, where arbitrary costs are attributed to the communication links. In addition, the characterization of the essential patterns provide insights on the resilience properties of closed-loop systems, with respect to changes in the information pattern. Both of  these problems will be studied as part of future research.
\vspace{-0.2cm}

\section*{Acknowledgements}
{\small
J. F. Carvalho thanks GRASP lab for their hospitality, as much of the writing of this paper and the research that led to it were carried out while visiting University of Pennsylvania at the beginning of Spring'15.
}

\bibliographystyle{IEEEtran}
\bibliography{IEEEabrv,bibliography}
\end{document}